\documentclass[a4paper,10pt]{amsart}
\usepackage{longtable}
\usepackage{amsmath}
\usepackage{amssymb}
\usepackage{amsthm}
\usepackage{hyperref}
\usepackage[retainorgcmds]{IEEEtrantools}
\newcommand{\bx}{\textnormal{Box}}
\newcommand{\R}{\ensuremath{\mathbb{R}}}
\newcommand{\Z}{\ensuremath{\mathbb{Z}}}




\DeclareMathOperator{\Ehr}{Ehr}
\DeclareMathOperator{\vol}{vol}
\DeclareMathOperator{\Eul}{Eul}

\newcommand{\codim}{\ensuremath{\textnormal{codim}}}


\newtheorem{Thm}{Theorem}[section]
\newtheorem*{Thm*}{Theorem}
\newtheorem{Def}[Thm]{Definition}
\newtheorem{Prop}[Thm]{Proposition}
\newtheorem*{Prop*}{Proposition}
\newtheorem{Lem}[Thm]{Lemma}
\newtheorem*{Lem*}{Lemma}
\newtheorem{Cor}[Thm]{Corollary}
\newtheorem*{Cor*}{Corollary}

\newtheorem{Conj*}{Conjecture}
\newtheorem{Quest}[Thm]{Question}

\theoremstyle{remark}
\newtheorem{Rem}[Thm]{Remark}

\newtheorem{Ex}[Thm]{Example}
\newtheorem*{Ex*}{Example}

\parindent=0mm

\title{Unimodality questions for integrally closed lattice polytopes}

\author[Jan Schepers]{Jan Schepers}
\address{K.U.Leuven, Celestijnenlaan 200B, 3001 Leuven, Belgium \newline \indent \textit{E-mail addresses:} \textnormal{ \texttt{janschepers1@gmail.com}, \texttt{leen.vanlangenhoven@wis.kuleuven.be}}}

\author[Leen Van Langenhoven]{Leen Van Langenhoven}
\thanks{The first named author is a Postdoctoral Fellow of the Research Foundation - Flanders (FWO). \\ \indent
\textit{MSC2010 Subject Classification:} 52B20, 05A20, 11B68.}


\begin{document}

\begin{abstract}
It is a famous open question whether every integrally closed reflexive polytope has a unimodal Ehrhart $\delta$-vector. We generalize this question to arbitrary integrally closed lattice polytopes and we prove unimodality for the $\delta$-vector of lattice parallelepipeds. This is the first nontrivial class of integrally closed polytopes. Moreover, we suggest a new approach to the problem for reflexive polytopes via triangulations. 
\end{abstract}

\maketitle

\normalsize

\section{Introduction}\label{introduction}

By a \textit{lattice polytope} we mean the convex hull in $\R^m$ of finitely many points of the lattice $\Z^m$. We denote the number of lattice points in a lattice polytope $P$ by $\sharp P$. Assume that $P$ has dimension $d$. It is well known that the \textit{Ehrhart generating series} 
\[ \Ehr(P,t) =  1 + \sum_{n = 1}^{\infty} \sharp(nP)\, t^n \]
can be written as
\[   \frac{\delta_0 + \delta_1 t + \cdots + \delta_d t^d}{(1-t)^{d+1}},\]
for some nonnegative integers $\delta_i$, with $\delta_0=1$ and $\delta_d$ equal to the number of lattice points in the interior of $P$ (see for instance \cite{BeckRobins}). We denote the denominator by $\delta(P,t)$, although in the mirror symmetry literature one usually uses the notation $h_P^*(t)$ (see \cite{BatyrevNill}). The degree $s$ of $\delta(P,t)$ is called the \textit{degree} of $P$, and $l=d+1-s$ is called the \textit{codegree} of $P$. It is the smallest positive integer such that $lP$ has an interior lattice point. The vector 
\[ \delta(P) = (\delta_0, \delta_1 , \ldots , \delta_d)\] is called the \textit{$\delta$-vector} of $P$. By the \textit{volume} of $P$ we always mean the normalized volume, i.e., $\vol(P)$ is $d !$ times the Euclidean volume. We note that $\vol(P) = \delta_0+ \cdots + \delta_d$. A \textit{unimodular simplex} is a simplex of volume 1.\\

A very general question is to classify all possible $\delta$-vectors of lattice polytopes. Many restrictions in the form of inequalities are known (see \cite{Stapledon}). Our paper is motivated by the question whether every \textit{integrally closed reflexive} polytope has a \textit{unimodal} $\delta$-vector \cite{HibiOhsugiGorenstein}. Let us explain these notions. A polytope $P$ of dimension $d$ in $\R^d$ is called \textit{reflexive} if it contains the origin in its interior and if the dual set
\[  P^{\times} = \{ y \in \R^d\,|\, \langle x, y \rangle \geqslant -1 \text{ for all } x\in P  \}    \]
is again a lattice polytope, where $\langle \cdot , \cdot \rangle$ denotes the standard inner product. This means that all the supporting hyperplanes of facets of $P$ can be given by a linear equation with constant coefficient 1. Equivalently, $P$ is a translate of a reflexive polytope if and only if the $\delta$-vector of $P$ is symmetric, i.e., $\delta_i = \delta_{d-i}$ for all $i$ \cite{HibiCombinatorica}. 

A lattice polytope is called \textit{integrally closed} if every lattice point in a multiple $nP$ can be written as the sum of $n$ lattice points in $P$. We note that a face of an integrally closed polytope is integrally closed as well. Finally, a sequence $(\delta_0,\ldots , \delta_d)$ of real numbers is called \textit{unimodal} if 
\[ \delta_0 \leqslant \cdots \leqslant \delta_{c-1} \leqslant \delta_c \geqslant \delta_{c+1} \geqslant \cdots \geqslant \delta_d \]
for some $c$. Hibi showed that the $\delta$-vector of a reflexive polytope of dimension $d$ is unimodal if $d\leqslant 5$ and conjectured that this would hold in general \cite{Hibi}, but Musta\c{t}\u{a} and Payne gave counterexamples for dimension 6 and higher \cite{MustataPayne,Payne}. However, no integrally closed counterexample is known. More generally, one may ask the following question.

\begin{Quest}\label{Hoofdvraag}
Let $P$ be an integrally closed lattice polytope. Is it true that $\delta(P)$ is unimodal\,?
\end{Quest}

We answer this question affirmatively for \textit{lattice parallelepipeds} in Section~\ref{section2}. After unimodular simplices, these are the easiest examples of integrally closed polytopes. Moreover, we prove that lattice parallelepipeds with an interior lattice point satisfy a stronger unimodality property: they have an \textit{alternatingly increasing} $\delta$-vector (see Definition~\ref{AlternatinglyIncreasing}). 
In Section~\ref{section3} we prove that integrally closed polytopes up to dimension 4 have a unimodal $\delta$-vector. Finally, in Section~\ref{ReflexivePolytopes} we turn our attention to reflexive polytopes again. We relate the unimodality question to the existence of certain triangulations, which we call \textit{box unimodal triangulations}. Integrally closed reflexive polytopes whose boundary admits such a triangulation have a unimodal $\delta$-vector. We remark that these triangulations exist for all polytopes up to dimension 4; an immediate corollary of this fact is Hibi's result that reflexive polytopes up to dimension 5 have a unimodal $\delta$-vector.

For other recent work on integrally closed polytopes (e.g.\ a bound on the volume) we refer to \cite{Hegedus}.

\subsection*{Acknowledgements} 
We would like to thank Sam Payne for a helpful discussion concerning Section~\ref{ReflexivePolytopes}. The computer programs Normaliz \cite{Normaliz} and TOPCOM \cite{TOPCOM} were very useful for experimenting with examples.

\section{The \texorpdfstring{$\delta$}{delta}-vector of a lattice parallelepiped}\label{section2}

In this section we illustrate Question~\ref{Hoofdvraag} by proving that the $\delta$-vector of a lattice parallelepiped is unimodal. Lattice parallelepipeds are the easiest examples of integrally closed polytopes after unimodular simplices. Moreover, we will show that a strong unimodality property holds if the parallelepiped has at least one interior lattice point.

Throughout this section we assume that $\lbrace v_0, v_1, \ldots , v_r \rbrace$ is a set of $r+1$ linearly independent vectors with integer coordinates in $\R^m$. We write $P = \langle v_0, v_1, \ldots , v_r\rangle$ for the $r$-dimensional simplex with the $v_i$ as vertices. Let $\lozenge P$, $\Pi P$ and $\bx(P)$ denote respectively the closed, half-open and open parallelepiped spanned by the vertices of $P$:
\[ \lozenge P  = \Big\{\sum_{i=0}^r \lambda_i v_i \mid 0\leqslant\lambda_i\leqslant 1 \Big\}, \]
\[ \Pi(P) = \Big\{\sum_{i=0}^r \lambda_i v_i \mid 0\leqslant\lambda_i < 1 \Big\}, \]
\[ \bx(P) =\Big\{\sum_{i=0}^r \lambda_i v_i \mid 0 <\lambda_i< 1 \Big\}. \]
It is well known that lattice parallelepipeds are integrally closed. This can be proven in an elementary way. For a more general statement, see \cite[Lemma 2.4]{GubeladzeLongEdges}. 

We will use the notation $b(P)$ for $\sharp \bx(P)$. The empty set is considered to be a simplex of dimension $-1$. We put $\lozenge \, \emptyset = \Pi (\emptyset) =  \bx(\emptyset) = \{0\}$ and hence $b(\emptyset) = 1$. 

\begin{Lem}\label{number of elements} 
$$\sharp (n \lozenge P) = \sum_{\emptyset\subseteq F \subseteq P} n^{\dim F+1} \sharp \Pi(F),$$
where the sum runs over all faces $F$ of $P$.
\end{Lem}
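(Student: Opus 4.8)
The plan is to decompose the closed parallelepiped $\lozenge P$ into half-open pieces indexed by the faces of the simplex $P$, and then count lattice points in the $n$-th dilate of each piece separately.

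First I would observe that the half-open parallelepipeds $\Pi(F)$, as $F$ ranges over all faces $F$ of $P$ (including $\emptyset$, contributing the single point $0$), give a partition of $\lozenge P$. Concretely, a point $x = \sum_{i=0}^r \lambda_i v_i$ with all $\lambda_i \in [0,1]$ lies in exactly one such piece: the one corresponding to the face $F$ spanned by those $v_i$ for which $\lambda_i > 0$ --- after translating by $\sum_{i \notin F} v_i$ we may assume the "full" coordinates are those indexed by $F$, so $x$ lands in a translate of $\Pi(F)$. Since the $v_i$ are linearly independent, this face is uniquely determined, so we really do get a disjoint decomposition (a standard "shelling-type" or inclusion–exclusion identity for the half-open decomposition of a box; compare the decomposition of a half-open cube into half-open faces).

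Next I would dilate. Because each $\Pi(F)$ is a fundamental parallelepiped for the sublattice generated by the vectors spanning $F$, scaling by $n$ multiplies its lattice-point count in a controlled way: the relevant statement is that $\sharp(n\,\Pi(F)) = n^{\dim F + 1}\,\sharp \Pi(F)$. This is the classical fact that the number of lattice points in a half-open fundamental parallelepiped of a rank-$k$ lattice, dilated by $n$, equals $n^k$ times the covolume-type count $\sharp\Pi(F)$; one proves it by noting $n\,\Pi(F)$ is tiled by $n^{k}$ translates of $\Pi(F)$ by lattice vectors of the form $\sum_{i\in F} c_i v_i$ with $0\le c_i \le n-1$, and each such translate contributes exactly $\sharp\Pi(F)$ lattice points since translation by a lattice vector is a bijection on lattice points. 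Summing over all faces $F$ then gives
\[ \sharp(n\lozenge P) = \sum_{\emptyset \subseteq F \subseteq P} \sharp(n\,\Pi(F)) = \sum_{\emptyset \subseteq F \subseteq P} n^{\dim F + 1}\,\sharp\Pi(F), \]
which is the claimed formula; the edge cases $F = \emptyset$ (giving $n^0 \cdot 1 = 1$) and $F = P$ are consistent with the stated conventions.

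The main obstacle I expect is setting up the disjoint decomposition cleanly: one must be careful that every point of $\lozenge P$ is accounted for exactly once, including boundary points where some $\lambda_i$ equal $0$ or $1$, and that the translation argument identifying each cell with a genuine translate of some $\Pi(F)$ is watertight (this is where linear independence of the $v_i$ is essential — it guarantees the barycentric coordinates $\lambda_i$ are well defined, so the "support face" of a point is unambiguous). Once that combinatorial bookkeeping is in place, the dilation count is routine, and the rest is just reindexing the sum over faces.
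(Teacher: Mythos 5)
Your argument is correct and reaches the conclusion by a more direct route than the paper. You partition the closed parallelepiped into $2^{r+1}$ half-open cells, one for each face $F$ of $P$ (including $\emptyset$), each containing exactly $\sharp\Pi(F)$ lattice points, and then apply the standard tiling of $n\Pi(F)$ by $n^{\dim F+1}$ lattice translates of $\Pi(F)$. The paper instead writes $\sharp(n\lozenge P)$ as the sum of relative-interior counts over the closed faces of $n\lozenge P$, identifies the faces of codimension $k$ with $2^k$ translates of $\lozenge nG$ for $G$ a face of $P$, and then carries out a double-counting manipulation ($2^{\codim(G,P)}=\sum_{G\subseteq G'\subseteq P}1$, swap the order of summation, and use $\sum_{\emptyset\subseteq G\subseteq G'}b(nG)=\sharp\Pi(nG')$) before invoking the same tiling fact. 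Your half-open decomposition collapses that bookkeeping into a single step; the paper's longer route has the side benefit of introducing the open-box counts $b(G)$, which are reused heavily in the rest of Section~2. One small correction to your cell description: if you index the cell of $x=\sum_i\lambda_i v_i$ by the set $S$ of indices with $\lambda_i>0$, the resulting piece is $\{\sum_{i\in S}\lambda_i v_i : 0<\lambda_i\leqslant 1\}$, which is \emph{not} a lattice translate of $\Pi(F)$ but rather its image under the lattice-preserving point reflection $y\mapsto \sum_{i\in S}v_i-y$ (so the lattice-point count is still $\sharp\Pi(F)$ and the proof goes through); indexing instead by the set where $\lambda_i<1$ and translating by $\sum_{\lambda_i=1}v_i$ produces a genuine translate of $\Pi(F)$ and makes the bookkeeping literal.
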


\begin{proof}
We have 
$$\sharp (n\lozenge P ) = \sum_{\substack{F\subseteq n\lozenge P\\ 0 \leqslant \dim F } } \sharp (F^\circ), $$ where the sum runs over all nonempty faces of $n\lozenge P$ and where $F^\circ$ denotes the relative interior of a face.

Let $F$ be a face of $n\lozenge P$ of codimension $k$ with $0\leqslant k\leqslant \dim( n\lozenge P) = r+1$. Then there exist fixed $\lambda_{i_1},\ldots \lambda_{i_k} \in \{0,1\}$ such that
\[  F  = \Big\{\sum_{i=0}^r \lambda_i n v_i \mid 0\leqslant\lambda_i\leqslant 1  \textnormal{ if }\lambda_i \neq \lambda_{i_1},\ldots, \lambda_{i_k} \Big\}. \] 
We find that $F$ is a translate of $\lozenge n G$, where $G$ is a face of $P$ of codimension $k$. There are $2^k$ translates of $\lozenge n G$ occurring as faces of $n\lozenge P$, one for each choice of the $\lambda_{i_j}$. Hence 
\begin{eqnarray*}
\sharp (n\lozenge P)& = & \sum_{\emptyset\subseteq G\subseteq P} 2^{\codim(G,P)} b(nG)\\
&=& \sum_{\emptyset\subseteq G\subseteq P} \sum_{G\subseteq G'\subseteq P}b(nG)\\
&=& \sum_{\emptyset\subseteq G'\subseteq P} \sum_{\emptyset\subseteq G\subseteq G'} b(nG)\\
&=& \sum_{\emptyset\subseteq G'\subseteq P} \sharp \Pi(nG')\\
&=& \sum_{\emptyset\subseteq G'\subseteq P} n^{\dim G'+1}\,\sharp\Pi (G'). 
\end{eqnarray*}
In the last step we used that $\Pi(nG')$ is covered by $n^{\dim G' +1}$ translates of $\Pi(G')$.
\end{proof}

Before we continue, we recall the definition of the \textit{Eulerian polynomials}. Let $n\geq 0$ be an integer. The Eulerian polynomial $\Eul(n,t)$ of degree $n$ is given by
\[  \Eul(n,t) = \sum_{i=0}^n t^i \sum_{j = 0}^i (-1)^j \binom{n+2}{j} (i+1-j)^{n+1}. \]
The coefficients of these polynomials are called the \textit{Eulerian numbers}. These numbers also occur in the $\delta$-vectors of hypercubes: $\Eul(n,t) = \delta(\ell^{n +1},t)$, where $\ell$ denotes the standard 1-dimensional simplex $\langle 0,1\rangle$. See \cite[Thm.\ 2.1]{BeckRobins} for a proof. We define $\Eul(-1,t) := 1/t$.

\begin{Prop}\label{delta par P}
We have the following equalities for $\delta(\lozenge P, t)$:
\begin{eqnarray*}
\lefteqn{\delta(\lozenge P, t)}\\
& = &\sum_{\emptyset\subseteq G \subseteq P} b(G) \sum_{n=0}^\infty t^n\,n^{\dim G+1}(n+1)^{\dim P-\dim G}(1-t)^{\dim P +2}\\
&=&\sum_{\emptyset\subseteq G \subseteq P} b(G) \sum_{G\subseteq F \subseteq P} t\,(1-t)^{\dim P -\dim F }\Eul(\dim F,t).
\end{eqnarray*}
\end{Prop}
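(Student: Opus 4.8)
The plan is to feed the formula of Lemma~\ref{number of elements} into the Ehrhart series, clear the denominator $(1-t)^{\dim\lozenge P+1}=(1-t)^{\dim P+2}$, and then simplify the resulting double sum in two different ways. Since the $n=0$ term of $\sum_{\emptyset\subseteq F\subseteq P}n^{\dim F+1}\sharp\Pi(F)$ equals $1$ (only the empty face contributes, using $0^0=1$ and $\sharp\Pi(\emptyset)=1$), Lemma~\ref{number of elements} gives $\Ehr(\lozenge P,t)=\sum_{n\ge 0}t^n\sum_{\emptyset\subseteq F\subseteq P}n^{\dim F+1}\sharp\Pi(F)$, so that
\[
\delta(\lozenge P,t)=(1-t)^{\dim P+2}\sum_{\emptyset\subseteq F\subseteq P}\sharp\Pi(F)\sum_{n=0}^\infty n^{\dim F+1}t^n .
\]
Next I would substitute $\sharp\Pi(F)=\sum_{\emptyset\subseteq G\subseteq F}b(G)$ — the case $n=1$ of the identity $\sharp\Pi(nG')=\sum_{\emptyset\subseteq G\subseteq G'}b(nG)$ already used in the proof of Lemma~\ref{number of elements}, valid because $\Pi(F)$ is the disjoint union of the untranslated open parallelepipeds $\bx(G)$ over the faces $G\subseteq F$ — and swap the order of summation, collecting for each face $G$ the faces $F$ with $G\subseteq F\subseteq P$. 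This yields
\[
\delta(\lozenge P,t)=\sum_{\emptyset\subseteq G\subseteq P}b(G)\sum_{G\subseteq F\subseteq P}(1-t)^{\dim P+2}\sum_{n=0}^\infty n^{\dim F+1}t^n .
\]

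For the first displayed equality of the proposition it then remains to prove the combinatorial identity $\sum_{G\subseteq F\subseteq P}n^{\dim F+1}=n^{\dim G+1}(n+1)^{\dim P-\dim G}$. Writing $\dim G=j$ and $\dim P=r$, the faces $F$ with $G\subseteq F\subseteq P$ are in bijection with subsets $S$ of the $r-j$ vertices of $P$ not lying on $G$, via $\dim F=j+|S|$; hence the left-hand side equals $n^{j+1}\sum_{S}n^{|S|}=n^{j+1}(1+n)^{r-j}$, and the case $G=\emptyset$ (that is, $j=-1$) is covered verbatim.

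For the second equality I would instead evaluate the inner series through Eulerian polynomials. From $\Eul(k,t)=\delta(\ell^{k+1},t)$ and $\sharp(m\,\ell^{k+1})=(m+1)^{k+1}$ one obtains $\sum_{m\ge 0}(m+1)^{k+1}t^m=\Eul(k,t)/(1-t)^{k+2}$; shifting the summation index (legitimate since the $m=0$ term vanishes for $k\ge 0$) and invoking the convention $\Eul(-1,t)=1/t$ for the empty face gives
\[
\sum_{n=0}^\infty n^{k+1}t^n=\frac{t\,\Eul(k,t)}{(1-t)^{k+2}}\qquad (k\ge -1).
\]
Substituting $k=\dim F$ into the last formula for $\delta(\lozenge P,t)$ and cancelling powers of $1-t$ turns the term of $F$ into $t(1-t)^{\dim P-\dim F}\Eul(\dim F,t)$, which is the claimed second equality; alternatively one can derive it from the first equality by expanding $(n+1)^{\dim P-\dim G}=\sum_{G\subseteq F\subseteq P}n^{\dim F-\dim G}$ and applying the same Eulerian identity. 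The only point needing care is the uniform treatment of the empty face — the conventions $b(\emptyset)=1$, $\dim\emptyset=-1$ and $\Eul(-1,t)=1/t$ are designed precisely to make every formula valid without a separate case — so beyond two routine index manipulations and tracking powers of $1-t$ I do not foresee a genuine obstacle.
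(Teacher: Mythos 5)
Your proposal is correct and follows essentially the same route as the paper: substitute Lemma~\ref{number of elements} into the Ehrhart series, replace $\sharp\Pi(F)$ by $\sum_{G\subseteq F}b(G)$, swap sums, and then evaluate $\sum_{G\subseteq F\subseteq P}n^{\dim F+1}$ once by the binomial/vertex-subset count and once via $\sum_n n^{k+1}t^n=t\,\Eul(k,t)/(1-t)^{k+2}$. The only cosmetic difference is that you treat the empty face uniformly through the convention $\Eul(-1,t)=1/t$ where the paper splits it off as a separate term; both are valid.
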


\begin{proof}
By Lemma~\ref{number of elements} we have
\begin{eqnarray*}
\delta(\lozenge P, t)&=& (1-t)^{\dim \lozenge P+1} \Ehr(\lozenge P, t)\\
&=& (1-t)^{\dim P+2}\sum_{n= 0}^\infty t^n \sum_{\emptyset\subseteq F\subseteq P} n^{\dim F+1}\sharp(\Pi(F))\\
&=&(1-t)^{\dim P+2}\sum_{n= 0}^\infty t^n \sum_{\emptyset\subseteq F\subseteq P} n^{\dim F+1}\sum_{\emptyset\subseteq G\subseteq F} b(G)\\
&=&(1-t)^{\dim P+2}\sum_{\emptyset\subseteq G\subseteq P}  b(G) \sum_{n= 0}^\infty     t^n\sum_{G\subseteq F\subseteq P}n^{\dim F+1}. 
\end{eqnarray*}
We compute $\delta(\lozenge P,t)$ in two ways. The first way is:
\begin{eqnarray*}
\lefteqn{\delta(\lozenge P, t)}\\
&=&(1-t)^{\dim P+2}\sum_{\emptyset\subseteq G\subseteq P}  b(G) \sum_{n= 0}^\infty     t^n \sum_{k=0}^{\dim P - \dim G}\binom{\scriptstyle{\dim P-\dim G}}{k}n^{\dim G+k+1}\\
&=& (1-t)^{\dim P+2}\sum_{\emptyset\subseteq G\subseteq P} b(G) \sum_{n= 0}^\infty     t^n n^{\dim G+1}(n+1)^{\dim P-\dim G}.
\end{eqnarray*}

The second way is:
\begin{eqnarray*}
\lefteqn{\delta(\lozenge P, t)}\\
&=& (1-t)^{\dim P+2}\Big(\hspace{-1mm} \sum_{\emptyset\subseteq G\subseteq P}  b(G)\sum_{\substack{G\subseteq F\subseteq P \\ 0\leqslant \dim F}} \sum_{n= 0}^\infty t^n n^{\dim F +1} + b(\emptyset)\sum_{n= 0}^\infty t^n \Big)\\
&=& (1-t)^{\dim P+2}\Big(\hspace{-1mm} \sum_{\emptyset\subseteq G\subseteq P}  b(G)\sum_{\substack{G\subseteq F\subseteq P\\0\leqslant \dim F}} t \sum_{m= 0}^\infty t^m (m+1)^{\dim F +1} + \frac{b(\emptyset)}{1-t} \Big) \\
&=&\sum_{\emptyset\subseteq G\subseteq P}  b(G)\sum_{\substack{G\subseteq F\subseteq P\\0\leqslant \dim F}} t\, (1-t)^{\dim P-\dim F} \delta(\ell^{\dim F +1},t) + b(\emptyset)(1-t)^{\dim P +1}\\
&=&\sum_{\emptyset\subseteq G\subseteq P} b(G) \sum_{G\subseteq F\subseteq P} t\,(1-t)^{\dim P-\dim F} \Eul(\dim F, t).
\end{eqnarray*}
\end{proof}

\noindent Let us write $\delta(\lozenge P, t) = \sum_{\emptyset\subseteq G \subseteq P} b(G)\,A(\dim P, \dim G, t)$
where
 \begin{align*}
A(i, j, t)&= (1-t)^{i+2} \sum_{n=0}^\infty       t^n n^{j+1}(n+1)^{i-j} \tag{$\clubsuit$}\label{A1}\\ 
&=\sum_{k=0}^{i-j} t\,(1-t)^{i - j-k } \binom{i-j}{k} \Eul(j+k,t) .\tag{$\heartsuit$} \label{A2}
\end{align*}
for integers $i\geqslant j\geqslant -1$.
 
To get an idea of how these polynomials look like, let us write them down for $i=4$:
\setlongtables
\begin{longtable}{c|c}
&  
\\ [-0.3cm] $j$ & $A(4,j,t)$   
\\ [0.2cm] \hline &  
\\ [-0.3cm] $-1$ & $1 + 26t + 66t^2 + 26t^3 + t^4$
 \\ [0.3cm] \hline  &  
 \\ [-0.3cm] 0 & $16t + 66t^2 + 36t^3 + 2t^4$
 \\ [0.3cm] \hline &  
 \\ [-0.3cm] 1 & $8t + 60t^2 + 48t^3 + 4t^4$
 \\ [0.3cm]  \hline  &    
 \\ [-0.3cm] 2 & $ 4t + 48t^2 + 60t^3  + 8t^4 $
 \\ [0.3cm] \hline &  
 \\ [-0.3cm] 3 & $2t + 36t^2 + 66t^3 + 16t^4$
 \\ [0.3cm] \hline &  
\\ [-0.3cm] $4$ & $\quad \ t + 26t^2 + 66t^3 + 26t^4 + t^5$
 \\ [0.1cm]
\end{longtable}

To prove that the $\delta$-vector of a parallelepiped is unimodal we will need that the coefficients of these $A$-polynomials are unimodal. This will be done in Proposition~\ref{h*unimodal}. First we derive some basic properties of the $A$-polynomials.

\begin{Lem}\label{mirror with} 
For all $j$ where $0\leqslant j \leqslant i-1$:
$$ t^{i+1} A(i,j,t^{-1})= A(i, i - 1  -j  ,t).$$
\end{Lem}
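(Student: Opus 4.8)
The identity in Lemma~\ref{mirror with} is a \emph{reversal} statement, so the natural tool is the reciprocity for generating functions of the shape $\sum_{n\ge 0}P(n)t^n$ with $P$ a polynomial, applied to formula~\eqref{A1}. The plan is as follows. First I record the reciprocity itself: if $P\in\Q[x]$ has degree $d$, then $\sum_{n\ge0}P(n)t^n$ represents a rational function with denominator $(1-t)^{d+1}$, and, as rational functions,
\[ \left(\sum_{n\ge0}P(n)t^n\right)\Big|_{t\mapsto 1/t}\;=\;-\sum_{n\ge1}P(-n)t^n . \]
By $\Q$-linearity in $P$ it suffices to verify this on the basis $P(x)=\binom{x+k}{k}$ with $k\ge 0$: there $\sum_{n\ge0}\binom{n+k}{k}t^n=(1-t)^{-(k+1)}$ and $\binom{k-n}{k}=(-1)^k\binom{n-1}{k}$ for all $n\ge 1$, and a short computation shows that both sides equal $(-1)^{k+1}t^{k+1}(1-t)^{-(k+1)}$.

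Next I apply this with $P(n)=n^{j+1}(n+1)^{i-j}$. Because $0\le j$ and $j\le i-1$, both exponents $j+1$ and $i-j$ are at least $1$; this forces $\deg P=i+1$ and makes the $n=1$ value of $n^{j+1}(n-1)^{i-j}$ vanish --- this is exactly where the hypothesis $0\le j\le i-1$ enters. By~\eqref{A1} we have $\sum_{n\ge0}P(n)t^n=A(i,j,t)/(1-t)^{i+2}$, so the left-hand side of the reciprocity equals $A(i,j,1/t)\big/(1-1/t)^{i+2}$, where $(1-1/t)^{i+2}=(-1)^i(1-t)^{i+2}/t^{i+2}$. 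For the right-hand side, $P(-n)=(-1)^{i+1}n^{j+1}(n-1)^{i-j}$, so dropping the (zero) $n=1$ term and substituting $n=m+1$ gives
\[ -\sum_{n\ge1}P(-n)t^n=(-1)^{i}\,t\sum_{m\ge0}m^{i-j}(m+1)^{j+1}t^m . \]
Writing $m^{i-j}(m+1)^{j+1}=m^{(i-1-j)+1}(m+1)^{\,i-(i-1-j)}$ and comparing with~\eqref{A1} identifies the last sum as $A(i,i-1-j,t)/(1-t)^{i+2}$.

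Finally I equate the two expressions for the reciprocity and clear the common factor $(1-t)^{-(i+2)}$ together with the sign $(-1)^i$, which leaves $t^{i+2}A(i,j,1/t)=t\,A(i,i-1-j,t)$; dividing by $t$ gives exactly $t^{i+1}A(i,j,t^{-1})=A(i,i-1-j,t)$ (and, as a by-product, confirms $\deg A(i,j,t)\le i+1$, so that the left-hand side is an honest polynomial). Conceptually there is no obstacle here; the one thing that needs care is the sign bookkeeping --- the powers of $-1$ produced separately by $(1-1/t)^{i+2}$, by $P(-n)$, and by the reciprocity --- together with the reminder that every manipulation is an identity of rational functions rather than of the divergent series $\sum_n P(n)t^{-n}$. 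As a numerical check, for $i=4$, $j=0$ one reads off $t^{5}A(4,0,t^{-1})=16t^{4}+66t^{3}+36t^{2}+2t=A(4,3,t)$ from the table.
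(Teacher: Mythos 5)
Your argument is correct, and it is a genuinely different route from the one in the paper. You prove the identity by applying the general reciprocity $\bigl(\sum_{n\geqslant 0}P(n)t^n\bigr)\big|_{t\mapsto 1/t}=-\sum_{n\geqslant 1}P(-n)t^n$ (verified on the binomial basis) directly to the series form~(\ref{A1}) with $P(n)=n^{j+1}(n+1)^{i-j}$, and the observation that $P(-n)=(-1)^{i+1}n^{j+1}(n-1)^{i-j}$ reindexes, after $n=m+1$, to the defining polynomial of $A(i,i-1-j,t)$. The paper instead starts from the Eulerian form~(\ref{A2}), invokes the palindromy $t^k\Eul(k,t^{-1})=\Eul(k,t)$, expands each $\Eul(j+k,t)$ back as a power series, and collapses the resulting binomial sum via $\sum_k\binom{i-j}{k}(-1)^k(n+1)^k=(-n)^{i-j}$ to land on~(\ref{A1}) for $A(i,i-1-j,t)$. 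The two computations share the same arithmetic core --- the paper's binomial collapse is in effect an inline special case of the reciprocity you quote --- but your packaging has two advantages: it uses only one of the two expressions for $A(i,j,t)$, and it makes completely explicit where the hypothesis $0\leqslant j\leqslant i-1$ enters (both exponents positive, and the $n=1$ term of $P(-n)$ vanishing so the shift is clean); as you note, it also yields $\deg A(i,j,t)\leqslant i+1$ for free. The paper's version has the merit of staying inside the Eulerian-polynomial toolbox it has already set up, which it reuses elsewhere. Your sign bookkeeping ($(1-1/t)^{i+2}=(-1)^i(1-t)^{i+2}/t^{i+2}$, the factor $(-1)^{i+1}$ from $P(-n)$, and the minus sign in the reciprocity) checks out, and the numerical verification against the $i=4$ table is consistent.
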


\begin{proof}
By the properties of the Eulerian numbers we have for every $k\geqslant 0$ that $t^k\Eul(k,t^{-1})=\Eul(k,t)$. We use this, equations (\ref{A1}) and (\ref{A2}) for $A(i,j,t)$, and the equality $\Eul(k,t) = \delta(\ell^{k+1},t)$ to prove the lemma:
\begin{IEEEeqnarray*}{lcl}
t^{i+1}A(i, j, t^{-1}) &\ =\ &t^{i+1}\sum_{k=0}^{i-j} t^{-1}  (1-t^{-1})^{i-j-k} \binom{i-j}{k} \Eul(j+k,t^{-1})\\
&=\ &\sum_{k=0}^{i-j} (t-1)^{i-j-k} \binom{i-j}{k} \Eul(j+k,t) \\
&=\ &\sum_{k=0}^{i-j} (t-1)^{i-j-k} \binom{i-j}{k} (1-t)^{j+k + 2}  \sum_{n= 0}^\infty  (n+1)^{j+k+1}t^n\\
&=\ &(1-t)^{i+2}\sum_{k=0}^{i-j} (-1)^{i-j-k} \binom{i-j}{k} \sum_{n= 0}^\infty     (n+1)^{j+k+1}t^n\\
&=\ &(1-t)^{i+2}\sum_{n= 0}^\infty     (-1)^{i-j} t^n (n+1)^{j+1} \sum_{k=0}^{i - j} \binom{i-j}{k} (-1)^{k} (n+1)^k\\
&=\ &(1-t)^{i+2}\sum_{n= 0}^\infty     (-1)^{i-j}t^n (n+1)^{j+1} (-(n+1)  + 1)^{i -j}\\
&=\ &(1-t)^{i+2}\sum_{n= 0}^\infty     t^n  n^{i -j}(n+1)^{j+1}\\
&=\ & A(i, i -1-j,t).
\end{IEEEeqnarray*}
\end{proof}

\begin{Rem}\label{graad}
For $0\leqslant j \leqslant i-1$ it follows from equation (\ref{A2}) that the degree of $A(i, j, t)$ is at most $i+1$. We also see that $t$ divides $A(i, j, t)$. Hence, from the previous lemma, we find that $\deg A(i, j, t)\leqslant i$. We will use this in Theorem~\ref{LatticeParUnimodal}.
\end{Rem}

\begin{Lem}\label{A(k,t)=delta(triangle line)}
The $A$-polynomials have the following properties:
\begin{enumerate}
\item $A(i,-1, t)= \delta(\ell^{i + 1},t),$
\item $A(i,i,t)=t\,\delta(\ell^{i+1},t)$ for $i\geqslant 0$ and
\item for all $0\leqslant j < i$ such that $2j+1 \geqslant i$ we have:
\[A(i,j,t)= 2^{i-j}\,t\,\delta(\triangle^{i-j}\times \ell^{2j+1-i},t),\]
where $\triangle$ denotes the standard 2-simplex $\langle (0,0),(1,0),(0,1) \rangle$.
\end{enumerate}
\end{Lem}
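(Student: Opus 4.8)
The plan is to read off all three identities directly from the power-series expression~(\ref{A1}) for $A(i,j,t)$, using only the elementary dictionary between lattice polytopes and their Ehrhart series. Recall that for a $d$-dimensional lattice polytope $Q$ one has $\delta(Q,t)=(1-t)^{d+1}\Ehr(Q,t)$; that $\sharp(n\ell)=n+1$ and $\sharp(n\triangle)=\binom{n+2}{2}$; that $\sharp\bigl(n(Q_1\times Q_2)\bigr)=\sharp(nQ_1)\,\sharp(nQ_2)$; and the identity $\Eul(d,t)=\delta(\ell^{d+1},t)$ recalled before Proposition~\ref{delta par P}.

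For part~(1), set $j=-1$ in~(\ref{A1}). With the convention $0^0=1$ (as already used in Lemma~\ref{number of elements}), the summand $n^{j+1}(n+1)^{i-j}$ becomes $(n+1)^{i+1}=\sharp(n\ell^{i+1})$, so $A(i,-1,t)=(1-t)^{i+2}\Ehr(\ell^{i+1},t)=\delta(\ell^{i+1},t)$. For part~(2), set $j=i$ in~(\ref{A1}); the summand becomes $n^{i+1}$, the $n=0$ term vanishes, and reindexing by $m=n-1$ gives $\sum_{n\geq 1}n^{i+1}t^n=t\sum_{m\geq 0}(m+1)^{i+1}t^m=t\,\Ehr(\ell^{i+1},t)$, whence $A(i,i,t)=t\,(1-t)^{i+2}\Ehr(\ell^{i+1},t)=t\,\delta(\ell^{i+1},t)$.

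Part~(3) is the substantive one, and the hypothesis $2j+1\geq i$ is exactly what makes it work. Put $a=i-j\geq 1$ and $b=2j+1-i\geq 0$, so that $a+b=j+1$ and $2a+b=i+1$. In the summand $n^{j+1}(n+1)^{i-j}$ there are $j+1$ factors $n$ and $a$ factors $(n+1)$; since $j+1\geq a$, we pair each factor $(n+1)$ with one factor $n$ and are left with $b$ spare factors $n$:
\[ n^{j+1}(n+1)^{i-j}=\bigl(n(n+1)\bigr)^{a}\,n^{b}=2^{a}\binom{n+1}{2}^{a}n^{b}. \]
For $n\geq 1$ the right-hand side equals $2^{a}\,\sharp\bigl((n-1)\triangle\bigr)^{a}\,\sharp\bigl((n-1)\ell\bigr)^{b}=2^{a}\,\sharp\bigl((n-1)(\triangle^{a}\times\ell^{b})\bigr)$, while the $n=0$ term of~(\ref{A1}) vanishes (because $j+1\geq 1$). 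Substituting into~(\ref{A1}) and reindexing by $m=n-1$ gives $A(i,j,t)=2^{a}\,t\,(1-t)^{i+2}\,\Ehr(\triangle^{a}\times\ell^{b},t)$; since $\dim(\triangle^{a}\times\ell^{b})=2a+b=i+1$, the factor $(1-t)^{i+2}$ converts the Ehrhart series into $\delta(\triangle^{a}\times\ell^{b},t)$, and substituting back $a=i-j$, $b=2j+1-i$ yields the claim. The degenerate case $b=0$, i.e.\ $2j+1=i$, is included, with $\triangle^{a}\times\ell^{0}=\triangle^{a}$.

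I do not anticipate a genuine obstacle: the identities are formal manipulations of generating functions. The only points needing care are the index shifts (and verifying that the $n=0$ term is zero), the $0^0=1$ convention in~(1), and the exponent bookkeeping $a+b=j+1$ and $2a+b=i+1$ — where it is precisely the inequality $2j+1\geq i$ that guarantees $b\geq 0$, so that the polytope $\triangle^{a}\times\ell^{b}$ is defined and has dimension $i+1$.
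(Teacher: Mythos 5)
Your proof is correct and follows essentially the same route as the paper's: parts (1) and (2) are read off from the series formula (with the paper citing equation (\ref{A2}) for part (2) where you instead shift the index in (\ref{A1}), a trivial difference), and for part (3) you perform the identical factorization $n^{j+1}(n+1)^{i-j}=n^{i-j}(n+1)^{i-j}n^{2j+1-i}$ followed by the same reindexing $n=m+1$ and identification with $\sharp\bigl(m(\triangle^{i-j}\times\ell^{2j+1-i})\bigr)$. Your explicit bookkeeping of $a+b=j+1$, $2a+b=i+1$ and of the $n=0$ term is a welcome clarification but not a different argument.
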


\begin{proof}
\begin{enumerate}
\item Using equation (\ref{A1}) for the $A$-polynomial, we see that 
$$A(i, -1,t)=\sum_{n=0}^\infty t^n\,(n+1)^{i+1}(1-t)^{i+2},$$
which is equal to $\delta(\ell^{i +1},t)$.
\item This is clear from equation (\ref{A2}).
\item Note that $\sharp (m\triangle) =(m+1)(m+2)/2$. When $2j+1\geqslant i > j\geqslant 0$, we have
\begin{IEEEeqnarray*}{rcl} 
A(i,j,t) &\ =\ & (1-t)^{i+2}\sum_{n= 0}^\infty t^n  n^{j+1} (n+1)^{i -j}\\
&=\ & (1-t)^{i+2}\sum_{n= 0}^\infty t^n n^{i -j}(n+1)^{i -j} n^{2j+1-i}\\
&=\ & (1-t)^{i+2}\sum_{m= 0}^\infty t^{m+1}(m+1)^{i -j}(m+2)^{i -j}(m+1)^{2j+1-i}\\
&=\ & 2^{i -j} t (1-t)^{i+2}\sum_{m= 0}^\infty t^m \sharp \big(m (\triangle^{i -j}\times \ell^{2j+1-i})\big)\\
&=\ & 2^{i -j} t\,\delta(\triangle^{i -j}\times \ell^{2j+1-i},t).
\end{IEEEeqnarray*}
\end{enumerate}
\end{proof}

From the previous lemma we see that the $A$-polynomials are related to $\delta$-vectors of Cartesian products of $\ell$ and $\triangle$. To prove unimodality of the coefficients of the $A$-polynomials, we will study in general what happens to a $\delta$-vector after taking the Cartesian product with $\ell$.

\subsection{Cartesian product with \texorpdfstring{$\ell$}{ell}}

\begin{Lem}\label{delta(Pxl)}
Let $P$ be a lattice polytope of dimension $d\geqslant 0$ with $\delta$-vector $\delta(P)=(h_0,h_1,\ldots,h_d)$.
Then the $\delta$-vector of the Cartesian product of $P$ with $\ell$ equals $\delta(P\times \ell)=(\delta_0,\delta_1,\ldots,\delta_d,0)$, where 
\[ \delta_i=(i+1)h_i+(d+1-i)h_{i-1}. \] 
Here we take $h_{d+1}=h_{-1}=0$. In particular, $\deg (P \times \ell) = \deg P$ if $\deg P = d$ and $\deg (P \times \ell) = \deg P + 1$ if $\deg P < d$.
\end{Lem}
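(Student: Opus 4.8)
The plan is to compute the Ehrhart series of $P\times\ell$ directly from that of $P$, read off the numerator, and extract coefficients. Since $n(P\times\ell)=nP\times n\ell$ and lattice points of a Cartesian product are pairs of lattice points of the factors, we have $\sharp\big(n(P\times\ell)\big)=\sharp(nP)\cdot\sharp(n\ell)=(n+1)\,\sharp(nP)$. Therefore
\[ \Ehr(P\times\ell,t)=\sum_{n=0}^\infty (n+1)\,\sharp(nP)\,t^n=\frac{d}{dt}\Big(t\cdot\Ehr(P,t)\Big), \]
the last equality because $\frac{d}{dt}\sum_{n}a_nt^{n+1}=\sum_n(n+1)a_nt^n$. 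This is the key step; everything else is bookkeeping.

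Next I would substitute $\Ehr(P,t)=H(t)/(1-t)^{d+1}$ with $H(t):=\sum_{i=0}^d h_it^i=\delta(P,t)$ and differentiate. The quotient rule gives
\[ \Ehr(P\times\ell,t)=\frac{\big(tH(t)\big)'(1-t)^{d+1}+(d+1)(1-t)^d\,tH(t)}{(1-t)^{2d+2}}=\frac{\big(tH(t)\big)'(1-t)+(d+1)\,tH(t)}{(1-t)^{d+2}}. \]
Since $\dim(P\times\ell)=d+1$, the numerator is exactly $\delta(P\times\ell,t)$. Expanding with $\big(tH(t)\big)'=\sum_{i=0}^d(i+1)h_it^i$, the numerator equals $\sum_{i=0}^d(i+1)h_it^i-\sum_{i=0}^d(i+1)h_it^{i+1}+\sum_{i=0}^d(d+1)h_it^{i+1}$. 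Collecting the coefficient of $t^i$ under the conventions $h_{-1}=h_{d+1}=0$ yields $(i+1)h_i-ih_{i-1}+(d+1)h_{i-1}=(i+1)h_i+(d+1-i)h_{i-1}$ for $0\le i\le d$, and $-(d+1)h_d+(d+1)h_d=0$ for $i=d+1$. This is precisely the asserted $\delta$-vector $(\delta_0,\ldots,\delta_d,0)$.

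Finally, for the "in particular" clause about degrees: since all $h_i\ge 0$, if $\deg P=d$ then $h_d\ne 0$, so $\delta_d=(d+1)h_d+h_{d-1}>0$ and $\deg(P\times\ell)=d=\deg P$; if instead $s:=\deg P<d$, then $h_s\ne 0$ while $h_{s+1}=\cdots=h_d=0$, so $\delta_{s+1}=(d-s)h_s>0$ while $\delta_i=0$ for all $i\ge s+2$, giving $\deg(P\times\ell)=s+1=\deg P+1$. The only point that needs care in the whole argument is the index bookkeeping at the two ends of the summation range when extracting coefficients; there is no genuine obstacle, and the calculus step could equally be replaced by a direct generating-function manipulation if one prefers to avoid differentiation.
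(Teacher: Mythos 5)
Your proof is correct, and it takes a genuinely different computational route from the paper's. Both arguments begin from the same observation that $\sharp\big(n(P\times\ell)\big)=(n+1)\,\sharp(nP)$, but from there the paper expands $\sharp(nP)=\sum_k h_{n-k}\binom{d+k}{k}$, multiplies the resulting series by $(1-t)^{d+2}$ term by term, and is left with a triple sum in which the coefficient of each $h_{i-k}$ for $k\geqslant 2$ must be shown to vanish; this requires a separate combinatorial identity (the paper's Lemma~\ref{sum0}, proved by induction on $k$). You instead recognize $\sum_n(n+1)\sharp(nP)t^n$ as $\frac{d}{dt}\bigl(t\cdot\Ehr(P,t)\bigr)$ and apply the quotient rule to $tH(t)/(1-t)^{d+1}$, which produces the numerator over $(1-t)^{d+2}$ in closed form; the cancellation that the paper must verify by hand happens automatically, and the coefficient extraction at the end is a two-line calculation. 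Your treatment of the boundary cases ($i=d+1$ giving $0$, and the two degree statements) is complete and correct. The one cosmetic remark is that the differentiation is an identity of formal power series, so no analytic justification is needed; if you wanted to avoid even the appearance of calculus you could phrase the same manipulation as multiplying the series for $\Ehr(P,t)$ by the operator $t\mapsto\frac{d}{dt}(t\,\cdot\,)$ acting coefficientwise, but as written the argument is sound and arguably cleaner than the paper's.
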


\begin{proof}
Since $(h_0+h_1t+\cdots+h_dt^d)/(1-t)^{d+1}=\Ehr(P,t)=\sum_{n= 0}^\infty \sharp (nP) t^n$, one computes that $\sharp (nP) =\sum_{k=0}^n h_{n-k}\binom{d+k}{k}.$
And since $\sharp (n\ell)=n+1$, we find that $\sharp (n (P\times \ell))=\sharp (nP)\times \sharp(n\ell)= \sum_{k=0}^n h_{n-k}\binom{d+k}{k} (n+1)$. This way we can find $\delta(P\times \ell)$ in terms of the coefficients of $\delta(P)$:
\begin{eqnarray*}
\delta(P\times \ell,t) & = & \Ehr(P\times \ell,t) (1-t)^{d+2}\\
&=& \sum_{n= 0}^\infty       t^n \sharp (n(P\times \ell))\sum_{j=0}^{d+2}(-1)^j\binom{d+2}{j}t^j\\
&=& \sum_{i= 0}^{\infty} t^i \sum_{m=0}^i\sharp \big((i-m)(P\times \ell)\big)(-1)^m\binom{d+2}{m}.
\end{eqnarray*}
Hence for $0\leqslant i \leqslant d+1$ we find
\begin{eqnarray*}
\delta_i &=&\sum_{m=0}^i\sharp\big((i-m)(P\times \ell)\big)(-1)^m\binom{d+2}{m}\\
&=&\sum_{m=0}^i (-1)^m\binom{d+2}{m} (i-m+1) \sum_{j=0}^{i-m} h_{i-m-j}\binom{d+j}{j} \\
&=&\sum_{m=0}^i  (-1)^m\binom{d+2}{m} (i-m+1)\sum_{k=m }^ih_{i-k}\binom{d+k-m}{k-m}\\
&=&\sum_{k=0}^i h_{i-k}\sum_{m=0}^k(-1)^m (i-m+1)\binom{d+2}{m}\binom{d+k-m}{k-m}.
\end{eqnarray*}

One then easily sees that in this last sum the coefficient of $h_i$ equals $i+1$ and the coefficient of $h_{i-1}$ equals $d+1-i$. To end this proof we need to verify that for $k\geqslant 2$:
$$\sum_{m=0}^k (-1)^m (i-m+1)\binom{d+2}{m}\binom{d+k-m}{k-m}=0.$$
This follows from Lemma \ref{sum0} below.
\end{proof}

\begin{Lem}\label{sum0}
Let $d\geqslant 0$ be an integer. For $k\geqslant 2$ we have:
\begin{enumerate}
\item $c_k= \sum_{m=0}^k(-1)^m \binom{d+2}{m}\binom{d+k-m}{k-m}=0,$
\item $b_k= \sum_{m=0}^k(-1)^m m\binom{d+2}{m}\binom{d+k-m}{k-m}=0.$
\end{enumerate}
Here we use the standard convention that $\binom{a}{b} = 0$ if $b>a$.
\end{Lem}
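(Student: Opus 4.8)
The plan is to prove both identities by the same device: recognize each alternating sum as the coefficient extraction from a product of two generating functions, one of which vanishes to high order at the relevant point. For part (1), I would introduce the generating function $(1+x)^{d+2}$ and $(1-x)^{-(d+1)}$ (or rather $\sum_{j\geqslant 0}\binom{d+j}{j}x^j = (1-x)^{-(d+1)}$), and observe that
\[
  (1+x)^{d+2}\cdot\frac{1}{(1-x)^{d+1}} \text{ is \emph{not} quite what we want;}
\]
instead the correct pairing uses $(1-x)^{d+2}$ against $\sum_j\binom{d+j}{j}x^j$. Concretely, $\sum_{m}(-1)^m\binom{d+2}{m}\binom{d+k-m}{k-m}$ is the coefficient of $x^k$ in $(1-x)^{d+2}\cdot(1-x)^{-(d+1)} = (1-x)$, which is $0$ for all $k\geqslant 2$ and indeed also handles $k=1$ (coefficient $-1$) correctly as a sanity check. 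This is the Vandermonde-type identity in its cleanest form, and it is really the only substantive computation.

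For part (2), I would reduce to part (1) rather than redo the generating-function argument from scratch. The standard trick is to absorb the factor $m$: write $m\binom{d+2}{m} = (d+2)\binom{d+1}{m-1}$, so that
\[
  b_k = (d+2)\sum_{m=1}^{k}(-1)^m\binom{d+1}{m-1}\binom{d+k-m}{k-m}
      = -(d+2)\sum_{m'=0}^{k-1}(-1)^{m'}\binom{d+1}{m'}\binom{(d+k-1)-m'}{(k-1)-m'},
\]
after reindexing $m' = m-1$. The inner sum is now exactly $c_{k-1}$ but with $d$ replaced by $d-1$ (so that the binomials read $\binom{(d-1)+2}{m'}$ and $\binom{(d-1)+(k-1)-m'}{(k-1)-m'}$). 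By part (1) this vanishes whenever $k-1\geqslant 2$, i.e.\ $k\geqslant 3$; the remaining case $k=2$ I would check by hand (a two- or three-term sum). One caveat: part (1) as I would prove it needs $d\geqslant 0$, so the substitution $d\mapsto d-1$ requires a separate glance at $d=0$, which is again a tiny direct check.

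The main obstacle, such as it is, is purely bookkeeping: getting the index shifts and the $\binom{a}{b}=0$ convention for $b<0$ or $b>a$ exactly right, so that the reduction of $b_k$ to a shifted instance of $c_k$ is valid including at the boundary terms $m=0$ and $m=k$. There is no conceptual difficulty — everything follows from the single identity $(1-x)^{d+2}(1-x)^{-(d+1)}=1-x$ together with the absorption identity $m\binom{n}{m}=n\binom{n-1}{m-1}$ — but I would be careful to state the generating-function step cleanly (e.g.\ as "compare coefficients of $x^k$ on both sides") rather than manipulate the sums term by term, since that is where sign errors creep in. The small-$k$ and small-$d$ base cases should be disposed of explicitly.
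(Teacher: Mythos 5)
Your part (1) is exactly the paper's argument: $c_k$ is the coefficient of $t^k$ in $(1-t)^{d+2}\cdot(1-t)^{-(d+1)}=1-t$, hence zero for $k\geqslant 2$. For part (2) you take a genuinely different absorption identity from the paper: the paper uses $m\binom{d+2}{m}=(d+3-m)\binom{d+2}{m-1}$, which keeps the upper index $d+2$ in the first binomial and after reindexing yields the clean recursion $b_k=-(d+2)c_{k-1}+b_{k-1}$, proved by induction on $k$. Your choice $m\binom{d+2}{m}=(d+2)\binom{d+1}{m-1}$ is equally legitimate, but your identification of the resulting sum is wrong. After the shift $m'=m-1$ the second binomial is
\[
\binom{d+k-m}{k-m}\Big|_{m=m'+1}=\binom{d+(k-1)-m'}{(k-1)-m'},
\]
i.e.\ its upper index is $(d+k-1)-m'$, \emph{not} $(d-1)+(k-1)-m'=(d+k-2)-m'$. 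So the inner sum is not $c_{k-1}$ with $d$ replaced by $d-1$: the first binomial has been demoted to $\binom{(d-1)+2}{m'}$ while the second still carries the original $d$. As written, the reduction to part (1) fails, and the base cases you propose to check ($k=2$, $d=0$) are patches for a substitution that is not actually valid.

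The good news is that the correct identification makes your argument \emph{simpler} than the paper's. The sum you actually obtain,
\[
\sum_{m'=0}^{k-1}(-1)^{m'}\binom{d+1}{m'}\binom{d+(k-1)-m'}{(k-1)-m'},
\]
is the coefficient of $x^{k-1}$ in $(1-x)^{d+1}\cdot(1-x)^{-(d+1)}=1$, which vanishes for all $k-1\geqslant 1$, i.e.\ for all $k\geqslant 2$ and all $d\geqslant 0$, with no induction and no base cases. So your route, once the bookkeeping is repaired, dispenses with the paper's induction on $k$ entirely; but you should state and use the correct product of generating functions rather than appeal to part (1) with a shifted $d$, since that appeal is false.
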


\begin{proof}
\begin{enumerate}
\item The first part of this lemma follows from the next equalities:
\begin{eqnarray*} 
1-t &=& \frac{(1-t)^{d+2}}{(1-t)^{d+1}}\\
 &=& \sum_{s=0}^{d+2}t^s(-1)^s\binom{d+2}{s}\sum_{n=0}^{\infty     }t^n\binom{d+n}{n}\\
&=&\sum_{k=0}^\infty      t^k\sum_{m=0}^k(-1)^m \binom{d+2}{m}\binom{d+k-m}{k-m}.
\end{eqnarray*}
So $c_k$ is the $k$-th coefficient in this expansion of $1-t$.
\item We prove the second part by using induction on $k$ and part (1) of this lemma.
The claim $b_k=0$ is clear for $k=2$. So now we assume $k\geqslant 3$ and $b_{k-1}=0$. Then we have:
\begin{eqnarray*}
b_k&=& \sum_{m=0}^k(-1)^m m\binom{d+2}{m}\binom{d+k-m}{k-m}\\
&=& \sum_{m=1}^k(-1)^m (d+3-m)\binom{d+2}{m-1}\binom{d+k-m}{k-m}\\
&=& (-1)\sum_{n=0}^{k-1}(-1)^n (d+2-n)\binom{d+2}{n}\binom{d+(k-1)-n}{(k-1)-n}\\
&=&-(d+2)c_{k-1}+b_{k-1}\\
&=&0.
\end{eqnarray*} 
\end{enumerate}
\end{proof}

\begin{Cor}\label{delta iplus1-delta i} With the same notation as above: \begin{enumerate}
\item $\delta_{i+1}-\delta_i=(i+2)(h_{i+1}-h_i)+(d+1-i)(h_i-h_{i-1})$ and 
\item in particular, 
\begin{enumerate}
\item whenever $h_0\leqslant h_1\leqslant \cdots\leqslant h_r$, then $\delta_0\leqslant \delta_1\leqslant \cdots\leqslant \delta_r$ and
\item whenever $h_r\geqslant h_{r+1}\geqslant\cdots\geqslant h_s$, where $s=\deg P$, then $\delta_{r+1}\geqslant \delta_{r+2}\geqslant\cdots\geqslant \delta_{s+1}.$
\end{enumerate}
\end{enumerate}
\end{Cor}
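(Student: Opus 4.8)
The plan is to deduce everything from the explicit formula $\delta_i = (i+1)h_i + (d+1-i)h_{i-1}$ supplied by Lemma~\ref{delta(Pxl)}, together with the conventions $h_{-1} = h_{d+1} = 0$. For part (1), I would substitute this formula for $\delta_{i+1}$ and for $\delta_i$, namely write $\delta_{i+1} = (i+2)h_{i+1} + (d-i)h_i$ and $\delta_i = (i+1)h_i + (d+1-i)h_{i-1}$, subtract, and collect the coefficients of $h_{i+1}$, $h_i$, $h_{i-1}$. Expanding the proposed right-hand side $(i+2)(h_{i+1}-h_i)+(d+1-i)(h_i-h_{i-1})$ and comparing coefficients shows the two expressions agree; this is a one-line computation.

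For part (2)(a), assume $h_0 \leqslant h_1 \leqslant \cdots \leqslant h_r$. It suffices to show $\delta_{i+1} - \delta_i \geqslant 0$ for every $i$ with $0 \leqslant i \leqslant r-1$, and then sum these inequalities. Reading off the formula from part (1): the factor $h_{i+1} - h_i$ is nonnegative because both indices lie in $\{0,\ldots,r\}$; the factor $h_i - h_{i-1}$ is nonnegative for $i \geqslant 1$ and equals $h_0 \geqslant 0$ when $i = 0$; and the coefficients $i+2$ and $d+1-i$ are strictly positive since $i \leqslant r-1 \leqslant d-1$. Hence each difference $\delta_{i+1}-\delta_i$ is nonnegative, giving $\delta_0 \leqslant \delta_1 \leqslant \cdots \leqslant \delta_r$.

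For part (2)(b), assume $h_r \geqslant h_{r+1} \geqslant \cdots \geqslant h_s$ with $s = \deg P$, so in particular $h_j = 0$ for $j > s$. I would show $\delta_{i+1} - \delta_i \leqslant 0$ for every $i$ with $r+1 \leqslant i \leqslant s$ and sum. In that range $h_i - h_{i-1} \leqslant 0$ because $r \leqslant i-1 < i \leqslant s$ places both indices in the decreasing block; and $h_{i+1} - h_i \leqslant 0$ because either $i+1 \leqslant s$, so again both indices lie in the decreasing block, or $i = s$, in which case $h_{i+1} = h_{s+1} = 0 \leqslant h_s = h_i$. The coefficients $i+2$ and $d+1-i$ are positive since $i \leqslant s \leqslant d$. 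Summing $\delta_{i+1}-\delta_i \leqslant 0$ from $i = r+1$ to $i = s$ yields $\delta_{r+1} \geqslant \delta_{r+2} \geqslant \cdots \geqslant \delta_{s+1}$.

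There is no genuine obstacle here: the corollary is a direct consequence of Lemma~\ref{delta(Pxl)}. The only point demanding attention is the bookkeeping of boundary indices — the vanishing conventions $h_{-1} = 0$ and $h_j = 0$ for $j > s$ at the two ends of each chain, and the verification that the multiplier $d+1-i$ remains strictly positive throughout the relevant ranges of $i$.
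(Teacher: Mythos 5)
Your proposal is correct and follows exactly the route the paper intends: part (1) is the direct substitution of the formula $\delta_i=(i+1)h_i+(d+1-i)h_{i-1}$ from Lemma~\ref{delta(Pxl)}, and parts (2)(a) and (2)(b) read off the signs of the two terms, with the boundary conventions $h_{-1}=0$ and $h_j=0$ for $j>s$ handled as you describe. The paper simply states that these steps are immediate, so your write-up is just a more explicit version of the same argument.
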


\begin{proof}
\begin{enumerate}
\item This follows immediately from Lemma \ref{delta(Pxl)}.
\item These statements follow immediately from part (1) of this corollary.
\end{enumerate}
\end{proof}

In Theorem~\ref{delta(triangle k line i)} we will prove that the following strong unimodality property is preserved for polytopes with codegree 2 or 3 when taking the Cartesian product with $\ell$. 

\begin{Def}\label{AlternatinglyIncreasing}
Let $h =(h_0,\ldots  ,h_s)$ be a sequence of real numbers. We say that $h$ is \textnormal{alternatingly increasing} if
\[ h_0 \leqslant h_s \leqslant h_1 \leqslant h_{s-1} \leqslant \cdots \leqslant h_{\lfloor(s-1)/2\rfloor}\leqslant h_{s-\lfloor(s-1)/2\rfloor}\leqslant h_{\lfloor (s+1)/2\rfloor}. \]
\end{Def}

We will slightly abuse this terminology in the following way: if $P$ is a polytope of dimension $d$ and degree $s<d$ with $\delta$-vector $(\delta_0,\ldots,\delta_s,0,\ldots,0)$, then we will say that $\delta(P)$ is alternatingly increasing if $(\delta_0,\ldots,\delta_s)$ is.

\begin{Cor}\label{alternatingly increasing}
Let $P$ be a lattice polytope of dimension $d\geqslant 1$ and degree $s < d$.  
Write $\delta(P)=(h_0,\ldots,h_s,0,\ldots,0)$.
\begin{enumerate}
\item If $h_0\leqslant h_1\leqslant\cdots\leqslant h_{\lfloor (s+1)/2\rfloor}$ and $h_i\leqslant h_{s-i}$ for all $0\leqslant i\leqslant \lfloor s/2\rfloor$, then 
the same is true for $\delta(P\times\ell)$, i.e., with $s'=s+1=\deg (P\times \ell)$: 
\begin{eqnarray*}
\delta_i\leqslant \delta_{s'-i}, \textit{ for } 0\leqslant i\leqslant \lfloor s'/2\rfloor
&\textit{ and}\\
\delta_0\leqslant \delta_1\leqslant \cdots\leqslant \delta_{\lfloor(s'+1)/2\rfloor}.&
\end{eqnarray*}
\item If $s \geqslant d-2 $ and $h_{s+1-i}\leqslant h_i$ for $1\leqslant i\leqslant\lfloor(s+1)/2\rfloor$ and $h_{\lfloor(s+1)/2\rfloor}\geqslant \cdots\geqslant h_s$, then the same is true for $\delta(P\times\ell)$, i.e., with $s'=s+1=\deg (P\times \ell)$:
\begin{eqnarray*}\delta_{s'+1-i} \leqslant \delta_i \textit{ for } 1\leqslant i \leqslant \lfloor (s'+1)/2\rfloor & \textit{ and}\\
\delta_{\lfloor(s'+1)/2\rfloor}\geqslant  \cdots\geqslant \delta_{s'}.& 
\end{eqnarray*}
\item \label{increase 3} These two results combined give the following: when $s \geqslant d-2$ and $\delta(P)$ is alternatingly increasing, then $\delta(P\times \ell)$ is also alternatingly increasing. 
\end{enumerate}
\end{Cor}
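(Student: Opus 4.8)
The plan is to deduce part (3) directly from parts (1) and (2), so the main work lies in proving those two parts, and both of these should follow from Corollary~\ref{delta iplus1-delta i}. Throughout I would write $\delta(P)=(h_0,\ldots,h_s,0,\ldots,0)$ with $h_{s+1}=\cdots=h_d=0$, and recall from Lemma~\ref{delta(Pxl)} that $\delta(P\times\ell)=(\delta_0,\ldots,\delta_{s+1},0,\ldots,0)$ with $s'=s+1$, where $\delta_i=(i+1)h_i+(d+1-i)h_{i-1}$, and that $\deg(P\times\ell)=s'$ precisely because $s<d$.

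For part (1), the two things to establish are the symmetry-type inequalities $\delta_i\leqslant\delta_{s'-i}$ for $0\leqslant i\leqslant\lfloor s'/2\rfloor$ and the increasing chain $\delta_0\leqslant\delta_1\leqslant\cdots\leqslant\delta_{\lfloor(s'+1)/2\rfloor}$. The increasing chain is the easy half: since the hypothesis gives $h_0\leqslant h_1\leqslant\cdots\leqslant h_{\lfloor(s+1)/2\rfloor}$, Corollary~\ref{delta iplus1-delta i}(2)(a) applied with $r=\lfloor(s+1)/2\rfloor$ yields $\delta_0\leqslant\cdots\leqslant\delta_{\lfloor(s+1)/2\rfloor}$, and one checks that $\lfloor(s'+1)/2\rfloor=\lfloor(s+2)/2\rfloor\leqslant\lfloor(s+1)/2\rfloor+1$, so at most one extra step $\delta_{\lfloor(s+1)/2\rfloor}\leqslant\delta_{\lfloor(s'+1)/2\rfloor}$ is needed; when $s$ is even this extra step is vacuous, and when $s$ is odd it follows from Corollary~\ref{delta iplus1-delta i}(1) since both $h_{i+1}-h_i$ and $h_i-h_{i-1}$ are then $\geqslant 0$ in the relevant range — here I must be a little careful about whether the needed difference sits inside the region where monotonicity of the $h_i$ is assumed, which is the first place a careful index check is required. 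For the inequalities $\delta_i\leqslant\delta_{s'-i}$, I would compute $\delta_{s'-i}-\delta_i$ using the formula for $\delta_i$: one gets a linear combination of the differences $h_{s-i}-h_i$, $h_{s+1-i}-h_{i-1}$ (these are $\geqslant 0$ by the hypothesis $h_j\leqslant h_{s-j}$, reindexed appropriately) with nonnegative coefficients coming from the $(i+1)$ and $(d+1-i)$ weights; keeping track of which terms survive at the ends (recalling $h_{s+1}=0$ and $h_{-1}=0$) is the bookkeeping to be done.

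Part (2) is the ``mirror'' statement and I would handle it symmetrically: the hypothesis $h_{s+1-i}\leqslant h_i$ and $h_{\lfloor(s+1)/2\rfloor}\geqslant\cdots\geqslant h_s$ is exactly what one needs so that Corollary~\ref{delta iplus1-delta i}(2)(b) (applied with $r=\lfloor(s+1)/2\rfloor$ and using $s=\deg P$) gives $\delta_{\lfloor(s+1)/2\rfloor+1}\geqslant\cdots\geqslant\delta_{s+1}=\delta_{s'}$, and again one reconciles $\lfloor(s+1)/2\rfloor+1$ with $\lfloor(s'+1)/2\rfloor$ by an index comparison plus possibly one application of Corollary~\ref{delta iplus1-delta i}(1) at the junction. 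The inequalities $\delta_{s'+1-i}\leqslant\delta_i$ come again from expanding the difference via $\delta_i=(i+1)h_i+(d+1-i)h_{i-1}$ into a nonnegatively-weighted sum of the hypothesised differences $h_{i-1}-h_{s+2-i}\geqslant 0$ etc.; this is where the assumption $s\geqslant d-2$ enters, since it controls how many of the trailing $h$'s are forced to vanish and hence whether the weights $(d+1-i)$ land on genuinely nonnegative differences rather than on uncontrolled terms. Finally, part (3): if $\delta(P)$ is alternatingly increasing then by definition its hypotheses split exactly into the ``increasing-and-below-the-mirror'' half feeding part (1) and the ``decreasing-and-above-the-mirror'' half feeding part (2) (here one uses that alternatingly increasing forces $h_i\leqslant h_{s-i}$ for small $i$ and $h_{s+1-i}\leqslant h_i$ for small $i$ simultaneously, which is just a rereading of the defining chain $h_0\leqslant h_s\leqslant h_1\leqslant\cdots$); combining the two conclusions reassembles precisely the defining chain for $\delta(P\times\ell)$, so $\delta(P\times\ell)$ is alternatingly increasing, subject to the convention that trailing zeros are ignored. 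I expect the main obstacle to be purely the floor-function index juggling at the ``middle'' of the vector — deciding, separately for $s$ even and $s$ odd, exactly which single extra comparison bridges $\lfloor(s+1)/2\rfloor$ and $\lfloor(s'+1)/2\rfloor$ and verifying it sits inside the hypothesised monotone region — rather than anything conceptually deep.
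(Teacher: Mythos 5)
Your overall architecture is the same as the paper's: both conclusions in each part come from Lemma~\ref{delta(Pxl)}, with the monotone runs handled by Corollary~\ref{delta iplus1-delta i}(2) and the mirror inequalities by expanding $\delta_{s'-i}-\delta_i$ (resp.\ $\delta_i-\delta_{s'+1-i}$) as a nonnegative combination of the hypothesised differences. For the record, the paper's decomposition in part (1) is
$\delta_{s+1-i}-\delta_i=(s+2-i)(h_{s+1-i}-h_{i-1})+(i+1)(h_{s-i}-h_i)+(d-s-1)(h_{s-i}-h_i)+(d-s-1)(h_i-h_{i-1})$,
and in part (2) the coefficient $s+2-d$ appears on the term $(h_{s+1-i}-h_{s+2-i})$; that coefficient being nonnegative is precisely where $s\geqslant d-2$ is used, which is a sharper statement than your ``controls how many trailing $h$'s vanish.''

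There is, however, one concrete misstep in your handling of the junction in part (1), and your proposed fix would fail. The parity is backwards: since $\lfloor(s'+1)/2\rfloor=\lfloor(s+1)/2\rfloor$ when $s$ is odd but $=\lfloor(s+1)/2\rfloor+1$ when $s$ is even, it is the \emph{even} case that needs the extra step $\delta_{s/2}\leqslant\delta_{s/2+1}$, not the odd one. Moreover this step does \emph{not} follow from Corollary~\ref{delta iplus1-delta i}(1) with ``both differences $\geqslant 0$'': the hypothesis only gives monotonicity of the $h_i$ up to index $s/2$, and $h_{s/2+1}-h_{s/2}$ can be negative (e.g.\ $h=(1,5,2)$ satisfies all hypotheses of part (1) with $s=2$). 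The correct bridge is that the extra step is exactly the case $i=\lfloor s'/2\rfloor$ of the mirror inequality $\delta_i\leqslant\delta_{s'-i}$, since $s'$ is odd there and $s'-\lfloor s'/2\rfloor=\lfloor s'/2\rfloor+1$; in the example above one indeed gets $\delta_2-\delta_1=3(h_2-h_1)+3(h_1-h_0)=3\geqslant 0$ with one negative summand compensated by the other. The analogous junction in part (2) (needed when $s$ is odd) is likewise the case $i=\lfloor(s'+1)/2\rfloor$ of $\delta_{s'+1-i}\leqslant\delta_i$. With that repair your plan coincides with the paper's proof.
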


\begin{proof}
\begin{enumerate} 
\item This statement follows from Corollary~\ref{delta iplus1-delta i}(2)(a) and Lemma~\ref{delta(Pxl)}: 
\begin{eqnarray*} 
\delta_{s+1-i}-\delta_i & = & (s+2-i)h_{s+1-i}+(d+1-(s+1-i))h_{s-i}\\
& & -(i+1)h_i-(d+1-i)h_{i-1}\\
&=& (s+2-i)(h_{s+1-i}-h_{i-1})+(i+1)(h_{s-i}-h_i)\\
&&+(d-s-1)(h_{s-i}-h_i)+(d-s-1)(h_i-h_{i-1}).
\end{eqnarray*}
\item The statement follows from Corollary \ref{delta iplus1-delta i}(2)(b) and Lemma~\ref{delta(Pxl)}:
\begin{eqnarray*} 
\delta_i-\delta_{s+2-i} & = & (i+1)h_i+(d+1-i)h_{i-1}\\
& & -(s+3-i)h_{s+2-i}-(d+1-(s+2-i))h_{s+1-i}\\
&=& (i+1)(h_i-h_{s+1-i})+(d+1-i)(h_{i-1}-h_{s+2-i})\\
&&+(s+2-d)(h_{s+1-i}-h_{s+2-i}).
\end{eqnarray*}
\item This follows from $(1)$ and $(2)$ of this corollary.
\end{enumerate}
\end{proof}

For the proof of the following theorem we need the notions of \textit{special simplex} and \textit{compressed polytope}. We refer to Athanasiadis' paper \cite{Athanasiadis} for the definitions.

\begin{Thm}\label{delta(triangle k line i)} 
When $P$ is a polytope such that $\delta(P)$ is alternatingly increasing with codegree $l = 2$ or $3$, then for all $i\geqslant 0$ we have that $\delta(P\times \ell^i)$ is also alternatingly increasing of the same codegree. In particular, for every $k,i\geqslant 0$ we have that $\delta(\triangle^k\times \ell^i)$ is alternatingly increasing. Especially, $\delta(\triangle^k\times \ell^i)$ is unimodal, with a `top' in $\delta_{\lfloor (2k+i-1)/2 \rfloor}$ if $(k,i)\neq (0,0)$.
\end{Thm}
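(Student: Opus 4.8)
The plan is to reduce everything to Corollary~\ref{alternatingly increasing} by an induction on $i$, handling the codegree $2$ and $3$ cases in parallel. The key observation is that Corollary~\ref{alternatingly increasing}(\ref{increase 3}) preserves the ``alternatingly increasing'' property under $\times\,\ell$ precisely when the degree $s$ of the current polytope satisfies $s\geqslant d-2$, i.e.\ when the codegree $l = d+1-s$ is at most $3$. So I first need to check that multiplying by $\ell$ does not increase the codegree: this is exactly the last sentence of Lemma~\ref{delta(Pxl)}, which says $\deg(P\times\ell) = \deg P + 1$ when $\deg P < \dim P$, hence $\codim$ (= codegree) is unchanged, so it stays $2$ or $3$. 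Therefore, starting from $P$ with alternatingly increasing $\delta$-vector and codegree $2$ or $3$, every $P\times\ell^i$ again has codegree $2$ or $3$, and Corollary~\ref{alternatingly increasing}(\ref{increase 3}) applies at each step; by induction $\delta(P\times\ell^i)$ is alternatingly increasing of the same codegree for all $i\geqslant 0$.

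Next I would establish the base cases for the family $\triangle^k\times\ell^i$. For $k = 0$ the polytope is a cube $\ell^i$ with $\delta$-vector the Eulerian polynomial $\Eul(i-1,t)$ (using $\Eul(i,t)=\delta(\ell^{i+1},t)$); Eulerian numbers are symmetric and unimodal, hence trivially alternatingly increasing, but the codegree of a cube is $1$, not $2$ or $3$, so this case must be treated separately: either invoke symmetry and unimodality of Eulerian numbers directly, or note $\delta(\ell^i\times\ell) = \delta(\ell^{i+1})$ and that a symmetric unimodal sequence is alternatingly increasing, and that Corollary~\ref{delta iplus1-delta i}(2) already gives unimodality is preserved under $\times\ell$ in the symmetric case. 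For $k\geqslant 1$, I claim $\triangle^k$ itself already has alternatingly increasing $\delta$-vector of codegree $2$ or $3$: compute $\delta(\triangle^k,t)$. Since $\sharp(n\triangle) = \binom{n+2}{2}$, one finds $\Ehr(\triangle^k,t) = \big(\sum_n \binom{n+2}{2}t^n\big)^k$, and the codegree of $\triangle$ is $2$ (need $2\triangle$ to have an interior point), so $\codim(\triangle^k) = 2$ by additivity of codegree under products \cite{BatyrevNill} — wait, codegree is \emph{not} additive in general, so instead I argue directly: $\triangle$ is reflexive after translation? No — better to just observe $\triangle^k$ has codegree $2$ because $2\triangle^k = (2\triangle)^k$ contains $(1,\dots,1)$ in its interior while $\triangle^k$ itself has no interior lattice point, and that $\delta(\triangle) = (1,1)$ is alternatingly increasing of codegree $2$; then $\triangle^k = \triangle\times\triangle^{k-1}$, but the product is with $\triangle$, not $\ell$. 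To stay within the available tools, I would instead write $\triangle^k\times\ell^i$ and reduce the $\triangle$-factors too, which is not directly covered.

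Given that, the cleaner route for the ``$\triangle^k\times\ell^i$'' assertion is: first show $\delta(\triangle^k)$ is alternatingly increasing of codegree $2$ directly. One can do this by an explicit computation — $\triangle^k$ is a compressed polytope with a special simplex (its vertices $0, e_i$ give a unimodular cover), so by Athanasiadis' results \cite{Athanasiadis} its $\delta$-vector equals the $h$-vector of a triangulation of the boundary of a simplex, which for a special simplex of the right codegree is known to be alternatingly increasing; alternatively, note $\triangle \cong$ (up to lattice equivalence) the polytope whose $\delta$-vector is $(1,1)$, and that for the join/product of copies one can compute $\delta(\triangle^k,t) = \sum_j \binom{k}{j}^2\!\!\big/\!\cdots$ — here I would just carry out the small computation and observe the pattern, or cite that $\triangle^k$ has the ``box polynomial'' $B(t) = t(1+t)^{?}$ making $\delta$ alternatingly increasing. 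Once $\delta(\triangle^k)$ is known to be alternatingly increasing of codegree $2$ (for $k\geqslant 1$), apply the first part of the theorem with $P = \triangle^k$ to get that $\delta(\triangle^k\times\ell^i)$ is alternatingly increasing, and for $k = 0$ handle cubes by Eulerian symmetry. Finally, ``alternatingly increasing $\Rightarrow$ unimodal'' is immediate from the definition (the chain of inequalities visibly forces $\delta_0\leqslant\cdots\leqslant\delta_c\geqslant\cdots$), and the location of the top: if the codegree is $2$ then $s = d-1 = 2k+i-1$ and the peak of an alternatingly increasing sequence of degree $s$ sits at $\lfloor(s+1)/2\rfloor = \lfloor(2k+i)/2\rfloor$... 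I need to reconcile this with the claimed $\lfloor(2k+i-1)/2\rfloor$, which suggests the top is at index $\lfloor(s+1)/2\rfloor$ or $\lceil s/2\rceil$ depending on parity; I would pin this down by checking both parities of $s$ against Definition~\ref{AlternatinglyIncreasing}. I expect the \textbf{main obstacle} to be precisely the base case $\delta(\triangle^k)$: proving it is alternatingly increasing of codegree exactly $2$ (not $3$) requires either an explicit generating-function computation or the special-simplex/compressed-polytope machinery of \cite{Athanasiadis}, and getting the off-by-one in the location of the top right for both parities of $2k+i$ will need a careful check.
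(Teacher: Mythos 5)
There is a genuine gap, and it is exactly where you locate it: the base case $\delta(\triangle^k)$. Your first part (induction on $i$ via Corollary~\ref{alternatingly increasing}(\ref{increase 3}), using the last sentence of Lemma~\ref{delta(Pxl)} to see the codegree is preserved) is the paper's argument verbatim. But for $k\geqslant 1$ you never actually establish that $\delta(\triangle^k)$ is alternatingly increasing, and the route you sketch contains an error: the codegree of $\triangle^k$ is $3$, not $2$. Indeed $2\triangle$ has \emph{no} interior lattice point (the candidate $(1,1)$ lies on the facet $x+y=2$), while $3\triangle^k$ contains $(1,1,\ldots,1,1)$ in its interior; so $\deg(\triangle^k)=2k-2$ and $l=3$. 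This mistake is also the source of your unresolved off-by-one for the peak: with $l=3$ one gets $s=\deg(\triangle^k\times\ell^i)=2k+i-2$, and the top of an alternatingly increasing vector of degree $s$ sits at $\lfloor(s+1)/2\rfloor=\lfloor(2k+i-1)/2\rfloor$, exactly as claimed. The paper closes the base case by the very mechanism you mention in passing but do not execute: $\triangle^k$ is a $(0,1)$-polytope satisfying the inequality system of \cite{HibiOhsugi}, hence compressed, and it contains the special simplex $\langle(0,\ldots,0),(1,0,\ldots,1,0),(0,1,\ldots,0,1)\rangle$; by \cite[Thm.\ 3.5]{Athanasiadis} its $\delta$-vector is symmetric and unimodal of codegree $3$, and a symmetric unimodal vector is alternatingly increasing. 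Without some such input (or an explicit computation you do not carry out), the ``in particular'' clause of the theorem is unproved.

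Two smaller points. First, your worry about the $k=0$ case is unfounded: $\ell^i$ has codegree $2$ for all $i\geqslant 1$ (its degree is $i-1$), not codegree $1$, so the induction starting from $\delta(\ell)=(1,0)$ goes through with no special treatment; one only needs the symmetry of the Eulerian numbers to rewrite the peak index $\lfloor i/2\rfloor$ as $\lfloor(i-1)/2\rfloor$ when $i$ is even. Second, your suggestion to factor $\triangle^k=\triangle\times\triangle^{k-1}$ and iterate cannot work with the tools at hand, as you yourself note: Corollary~\ref{alternatingly increasing} only controls products with $\ell$, so the $\triangle$-factors must be dealt with all at once, which is precisely what the compressed/special-simplex argument accomplishes.
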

\begin{proof}
The first claim follows immediately from Corollary~\ref{alternatingly increasing}(\ref{increase 3}) by induction on $i$.

Next we look at the polytope $\triangle^k\times\ell^i$. First let $k$ be zero. Since $\delta(\ell)=(1,0)$ it follows from Corollary~\ref{alternatingly increasing}(\ref{increase 3}) that $\delta(\ell^i)$ is alternatingly increasing for all $i$. Moreover, $\ell^i$ has codegree 2 if $i>0$. From Definition~\ref{AlternatinglyIncreasing} we see that the top of $\delta(\ell^i)$ lies at $\delta_{\lfloor i/2 \rfloor}$. If $i$ is even, this equals $\delta_{\lfloor (i-1)/2 \rfloor}$ by the symmetry of the Eulerian numbers.

Now take $k\geqslant 1$. It suffices to prove that $\delta(\triangle^k)$ is alternatingly increasing of codegree $c = 3$. It is easy to see that $\triangle^k$ is a $(0,1)$-polytope that satisfies a system of inequalities as in the main theorem of \cite{HibiOhsugi}, so it follows that $\triangle^k$ is compressed. It also has the special simplex $\Sigma=\left\langle ( 0,0,\ldots,0,0 ),(1,0,\ldots,1,0),(0,1,\ldots,0,1) \right\rangle$. So from \cite[Thm.\ 3.5]{Athanasiadis} it follows that $\delta(\triangle^k)$ is symmetric and unimodal of codegree $l=3$, hence also alternatingly increasing. The fact that the top of $\delta(\triangle^k \times \ell^i)$ lies at $\delta_{\lfloor (2k+i-1)/2 \rfloor}$ if $k\neq 0$ simply follows from Definition~\ref{AlternatinglyIncreasing}. 
\end{proof}

\begin{Rem}
Note that $\triangle^k$ is a \textit{Gorenstein polytope} of index 3. This means by definition that $3\triangle^k$ is a translate of a reflexive polytope. So, instead of Athanasiadis' result, we also could have used \cite[Thm.\ 1]{BrunsRomer}.
\end{Rem}




Finally we return to the unimodality questions.

\begin{Prop}\label{h*unimodal} 
For every $-1 \leqslant j \leqslant i$ we have that the coefficients of $A(i,j,t)$ form a unimodal sequence.
\end{Prop}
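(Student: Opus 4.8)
The plan is to reduce the claim to two facts already in hand. By Lemma~\ref{A(k,t)=delta(triangle line)}, every $A(i,j,t)$ equals, up to a factor $t$ and a positive power of $2$, either the $\delta$-polynomial $\delta(\ell^{i+1},t)$ of a cube (when $j=-1$ or $j=i$), or the $\delta$-polynomial $\delta(\triangle^{i-j}\times\ell^{2j+1-i},t)$ of a product of standard triangles and segments (when $0\leqslant j<i$ and $2j+1\geqslant i$). By Lemma~\ref{mirror with}, for $0\leqslant j\leqslant i-1$ the coefficient sequence of $A(i,j,t)$ is the reversal of that of $A(i,i-1-j,t)$; and by Remark~\ref{graad} both polynomials have degree at most $i$ and are divisible by $t$, so, read as coefficient vectors of length $i+2$ (indexed $0,\dots,i+1$, with $0$ in the first and last slots), one is literally the reverse of the other. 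Throughout I will use the trivial facts that a finite sequence of nonnegative reals stays unimodal after reversing it, after a shift (multiplying the polynomial by $t$), and after scaling by a positive constant; the coefficients of every $A(i,j,t)$ are nonnegative, as is visible from equation (\ref{A1}) once it is multiplied out, or from the three identities of Lemma~\ref{A(k,t)=delta(triangle line)}. Granting this, it suffices to prove unimodality for $j=-1$, for $j=i$, and for $0\leqslant j\leqslant i-1$ with $2j+1\geqslant i$.

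For the boundary values $j=-1$ and $j=i$, Lemma~\ref{A(k,t)=delta(triangle line)}(1)--(2) give $A(i,-1,t)=\delta(\ell^{i+1},t)$ and $A(i,i,t)=t\,\delta(\ell^{i+1},t)$, whose coefficients are (up to a shift) the Eulerian numbers. These are unimodal --- classically, or as the case $k=0$ of Theorem~\ref{delta(triangle k line i)}, which gives that $\delta(\ell^{i+1})$ is alternatingly increasing --- so both polynomials are unimodal.

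For $0\leqslant j\leqslant i-1$ with $2j+1\geqslant i$, Lemma~\ref{A(k,t)=delta(triangle line)}(3) gives $A(i,j,t)=2^{\,i-j}\,t\,\delta(\triangle^{\,i-j}\times\ell^{\,2j+1-i},t)$. By Theorem~\ref{delta(triangle k line i)}, $\delta(\triangle^{\,i-j}\times\ell^{\,2j+1-i})$ is alternatingly increasing, hence unimodal; multiplying by $t$ and by $2^{i-j}>0$ preserves this. Finally, for $0\leqslant j\leqslant i-1$ with $2j+1<i$, put $j'=i-1-j$; then $0\leqslant j'\leqslant i-1$ and $2j'+1=2i-1-2j>i$, so $A(i,j',t)$ is unimodal by the previous sentence, while Lemma~\ref{mirror with} together with Remark~\ref{graad} identifies the coefficient vector of $A(i,i-1-j',t)=A(i,j,t)$ with the reversal of that of $A(i,j',t)$, which is again unimodal. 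Since $\{-1\}$, $\{i\}$, $\{\,0\leqslant j\leqslant i-1: 2j+1\geqslant i\,\}$ and $\{\,0\leqslant j\leqslant i-1: 2j+1<i\,\}$ exhaust $-1\leqslant j\leqslant i$, this finishes the proof.

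I do not expect a genuine obstacle here: the substantive content is already in Theorem~\ref{delta(triangle k line i)}, and what remains is bookkeeping. The points that need care are: confirming that the four ranges of $j$ above really tile $[-1,i]$ (in particular that $j=-1$ and $j=i$ must be treated by Lemma~\ref{A(k,t)=delta(triangle line)}(1)--(2) since Lemma~\ref{mirror with} excludes them); checking the arithmetic $i-1-j'=j$ and $2j'+1>i$ in the reflection step; and noting that Remark~\ref{graad} is exactly what makes Lemma~\ref{mirror with} an honest reversal of a fixed-length coefficient vector, so that reversing cannot manufacture a spurious interior valley. It is also worth stating the elementary preservation observations (unimodality is stable under reversal, shift by $t$, and positive scaling), together with nonnegativity of the coefficients of $A(i,j,t)$, cleanly before they are invoked.
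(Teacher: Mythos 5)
Your proof is correct and takes essentially the same route as the paper, whose entire proof of this proposition is the one-line citation of Lemma~\ref{mirror with}, Lemma~\ref{A(k,t)=delta(triangle line)} and Theorem~\ref{delta(triangle k line i)}; your case analysis (the boundary cases $j=-1$ and $j=i$, the range $2j+1\geqslant i$ via Lemma~\ref{A(k,t)=delta(triangle line)}(3), and the range $2j+1<i$ by reflection through Lemma~\ref{mirror with} and Remark~\ref{graad}) is exactly the bookkeeping that citation leaves implicit, and it checks out. The only nitpick is that nonnegativity of the coefficients is not really \emph{visible} from equation~(\ref{A1}) as written, since the factor $(1-t)^{i+2}$ alternates in sign; but that fact is not needed for unimodality of a sequence of reals and in any case follows from the identifications with $\delta$-polynomials that you invoke.
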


\begin{proof}
This follows from Lemma~\ref{mirror with}, Lemma~\ref{A(k,t)=delta(triangle line)} and Theorem~\ref{delta(triangle k line i)}.
\end{proof}

\begin{Thm}\label{LatticeParUnimodal} 
The $\delta$-vector of a lattice parallelepiped is unimodal.
\end{Thm}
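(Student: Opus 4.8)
The plan is to combine Proposition~\ref{delta par P} with Proposition~\ref{h*unimodal}. Recall from the discussion after Proposition~\ref{delta par P} that
\[
\delta(\lozenge P, t) = \sum_{\emptyset \subseteq G \subseteq P} b(G)\, A(\dim P, \dim G, t),
\]
where each $b(G) = \sharp\bx(G) \geqslant 0$ is a nonnegative integer. Since a nonnegative linear combination of unimodal polynomials with nonnegative coefficients is unimodal \emph{provided they all peak in a compatible way}, the heart of the matter is to check that the various $A(\dim P, j, t)$, for $j$ ranging over $-1, 0, 1, \ldots, \dim P$, have their ``tops'' lined up so that no cancellation of the unimodal shape occurs.

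First I would fix $d = \dim P$ and recall precisely where each $A(d,j,t)$ peaks. By Lemma~\ref{A(k,t)=delta(triangle line)}(3), for $j$ with $2j+1 \geqslant d$ (and $0 \leqslant j < d$) we have $A(d,j,t) = 2^{d-j}\, t\, \delta(\triangle^{d-j} \times \ell^{2j+1-d}, t)$, and by Theorem~\ref{delta(triangle k line i)} the $\delta$-vector of $\triangle^{d-j}\times\ell^{2j+1-d}$ peaks at index $\lfloor(2(d-j)+(2j+1-d)-1)/2\rfloor = \lfloor (d-1)/2 \rfloor$ when $(d-j, 2j+1-d) \neq (0,0)$; multiplying by $t$ shifts the peak to $\lfloor (d-1)/2 \rfloor + 1 = \lfloor (d+1)/2 \rfloor$. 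For the remaining range $2j+1 < d$, I would use Lemma~\ref{mirror with}, which gives $t^{d+1} A(d,j,t^{-1}) = A(d, d-1-j, t)$: this reflects the coefficient sequence of $A(d,j,t)$ (up to the degree bound in Remark~\ref{graad}), so if $A(d,d-1-j,t)$ peaks at index $p$ then $A(d,j,t)$ peaks at index $d-p$ or $d+1-p$ (one must be careful with the exact reflection because of the $t$-divisibility and the degree bound $\deg A(d,j,t) \leqslant d$). The two boundary cases $A(d,-1,t) = \delta(\ell^{d+1},t) = \Eul(d,t)$ and $A(d,d,t) = t\,\Eul(d,t)$ are handled by the symmetry of the Eulerian numbers (peaks at $\lfloor d/2 \rfloor$ and $\lfloor d/2\rfloor+1$, which coincide with $\lfloor (d-1)/2\rfloor$ and $\lfloor(d+1)/2\rfloor$ up to the parity symmetry).

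The upshot I expect is that every $A(d,j,t)$ that actually contributes is unimodal with its peak at one of the two adjacent indices $\lfloor (d-1)/2 \rfloor$ or $\lfloor (d+1)/2 \rfloor$ (these differ by at most $1$, and agree with each other when $d$ is odd up to using the Eulerian symmetry); more precisely, reading off the table for $d=4$ one sees all the $A(4,j,t)$ increase up to $t^2$ and decrease afterwards. Once this is established, for any index pair $i < i+1$ both lying weakly below the common peak, every summand $b(G)A(d,\dim G,t)$ is weakly increasing from coefficient $i$ to coefficient $i+1$, hence so is the sum; symmetrically on the other side of the peak. Therefore $\delta(\lozenge P,t)$ is unimodal.

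The main obstacle is the bookkeeping in the previous paragraph: showing that the peaks of all the $A(d,j,t)$ genuinely align (or are off by a harmless single index) rather than spreading out, since a sum of unimodal sequences with peaks far apart need not be unimodal. Concretely, I would need to verify that for the ``small $j$'' range the reflection via Lemma~\ref{mirror with} sends the peak of $A(d,d-1-j,t)$ — which lies near $\lfloor(d+1)/2\rfloor$ by the $\triangle^{k}\times\ell^{i}$ analysis — to an index again near $\lfloor (d+1)/2\rfloor$, using $\deg A(d,j,t) \leqslant d$ from Remark~\ref{graad} and the fact that $t \mid A(d,j,t)$. This is a finite, self-contained parity computation, and it is the only place where real care is required; everything else is assembling Proposition~\ref{delta par P}, Proposition~\ref{h*unimodal}, and nonnegativity of the $b(G)$.
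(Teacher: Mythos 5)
Your proposal is correct and follows essentially the same route as the paper: decompose $\delta(\lozenge P,t)=\sum_G b(G)A(\dim P,\dim G,t)$, locate the peak of each $A(d,j,t)$ via Lemma~\ref{A(k,t)=delta(triangle line)}, Theorem~\ref{delta(triangle k line i)} and the reflection Lemma~\ref{mirror with}, and observe that all peaks land on at most two adjacent indices so the middle comparison is harmless. The ``finite parity computation'' you defer is exactly the case analysis the paper carries out (note the small slip: $2(d-j)+(2j+1-d)-1=d$, so for $2j+1\geqslant d$ the peak of $A(d,j,t)$ sits at $\lfloor d/2\rfloor+1$ and for $2j+1<d$ at $d-\lfloor d/2\rfloor$, not at $\lfloor(d\pm1)/2\rfloor$ as written; this does not affect the structure of the argument).
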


\begin{proof}
Let $\lozenge P$ be a lattice parallelepiped spanned by the vertices of a simplex $P$. We use Proposition~\ref{delta par P}, Lemma~\ref{mirror with}, Lemma~\ref{A(k,t)=delta(triangle line)} and Theorem~\ref{delta(triangle k line i)} in the following, as well as Remark~\ref{graad}.

When $\dim P$ is odd (and positive), then
\begin{enumerate}
\item $A(\dim P,-1,t)= a_0 + a_1t + \cdots + a_{\dim P}t^{\dim P}$ with\\
$1 = a_0 \leqslant a_1 \leqslant \cdots\leqslant a_{\lfloor \dim P/2\rfloor}=a_{\lfloor \dim P/2\rfloor+1}\geqslant\cdots\geqslant a_{\dim P-1}\geqslant a_{\dim P}$,
\item $A(\dim P,\dim P,t)=tA(\dim P,-1,t)=0+a_1t+\cdots+a_{\dim P+1}t^{\dim P+1}$. So\\
$a_0=0\leqslant a_1\leqslant\cdots\leqslant a_{\lfloor \dim P/2\rfloor+1}=a_{\lfloor \dim P/2 \rfloor +2}\geqslant \cdots\geqslant a_{\dim P +1}$ and
\item for all $k$ between $0$ and $\dim P-1$, we have $A(\dim P,k,t)=0+a_1t+\cdots+a_{\dim P}t^{\dim P}$ where\\
$a_0=0\leqslant a_1\leqslant\cdots\leqslant a_{\lfloor \dim P/2\rfloor+1} \geqslant a_{\lfloor \dim P/2\rfloor +2}\geqslant\cdots\geqslant a_{\dim P}$.
\end{enumerate}
Hence $\delta(\lozenge P,t)$ is also unimodal.

When $\dim P$ is even, then
\begin{enumerate}
\item $A(\dim P,-1,t)= a_0 + a_1t + \cdots + a_{\dim P}t^{\dim P}$ with\\
$1 = a_0 \leqslant a_1 \leqslant \cdots\leqslant a_{ \dim P/2} \geqslant a_{ \dim P/2 +1}\geqslant\cdots\geqslant a_{\dim P-1}\geqslant a_{\dim P}$,
\item $A(\dim P,\dim P,t)=tA(\dim P,-1,t)=0+a_1t+\cdots+a_{\dim P+1}t^{\dim P+1}$. So \\
$a_0=0\leqslant a_1\leqslant\cdots\leqslant a_{\dim P/2+1}\geqslant a_{ \dim P/2  +2}\geqslant \cdots\geqslant a_{\dim P +1}$,
\item for all $k$ between $0$ and $\dim P/2-1$, we have $A(\dim P, k ,t)=0+a_1t+\cdots+a_{\dim P}t^{\dim P}$ where\\
$a_0=0\leqslant a_1\leqslant\cdots\leqslant a_{ \dim P/2} \geqslant a_{ \dim P/2 +1}\geqslant\cdots\geqslant a_{\dim P}$ and
\item for all $k$ between $\dim P/2$ and $\dim P-1$, we have $A(\dim P,k,t)=0+a_1t+\cdots+a_{\dim P}t^{\dim P}$ where\\
$a_0=0\leqslant a_1\leqslant\cdots\leqslant a_{ \dim P/2 +1} \geqslant a_{ \dim P/2 +2}\geqslant\cdots\geqslant a_{\dim P}$.
\end{enumerate}
Hence in $\delta(\lozenge P,t)= \delta_0+ \delta_1\,t + \cdots + \delta_{\dim P+1}\,t^{\dim P+1}$, we have 
$$\delta_0\leqslant \delta_1\leqslant\cdots\leqslant \delta_{\dim P/2}  \stackrel{\textnormal{ ? }}{\thicksim} \delta_{\dim P/2+1}\geqslant\cdots\geqslant \delta_{\dim P+1}.$$
So whatever (in)equality the question mark may represent, $\delta(\lozenge P)$ is unimodal.
\end{proof}

\begin{Ex}
In the above proof, when $\dim P$ is even, we were unable to say which inequality there is between $\delta_{\dim P/2}$ and $\delta_{\dim P/2+1}$. We remark that every scenario is possible. We give examples for $\dim P =4$.\\

When $P=\langle (0,0,0,0,1),(2,0,0,0,2),(0,1,0,0,1),(0,0,1,0,1),(1,1,1,2,1) \rangle$, then $\delta(\lozenge P)=(1,46,204,194,35,0)$.\\

When $P=\langle (0,0,0,0,1),(1,0,0,0,2),(0,1,0,0,1),(0,0,1,0,1),(1,1,1,2,1)\rangle$, then $\delta(\lozenge P)=(1,27,92,92,27,1)$.\\

And when $P=\langle (0,0,0,0,1),(1,0,0,0,1),(0,1,0,0,1),(0,0,1,0,1),(1,1,1,3,1) \rangle$, then $\delta(\lozenge P)=(1,28,118,158,53,2)$.
\end{Ex}

\begin{Ex}\label{voorbeeld}
It is not true in general that the $\delta$-vector of a lattice parallelepiped is alternatingly increasing. Simply take the parallelepiped spanned by $(0,0,1),(3,0,1)$ and $(0,1,1)$. Then $\delta(\lozenge P ) = (1,8,9,0)$. 
\end{Ex}

Now we show that the $\delta$-vector of a parallelepiped is alternatingly increasing if there is an interior lattice point.

\begin{Prop}
Suppose $\lozenge P$ is a lattice parallelepiped with at least one interior point. Then $\delta(\lozenge P)$ is alternatingly increasing.
\end{Prop}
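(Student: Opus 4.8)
Set $d=\dim P$ and $s:=d+1=\dim\lozenge P$, and write
$\delta(\lozenge P,t)=a(t)+t\,b(t)$, where $a$ is the unique polynomial with $t^{s}a(t^{-1})=a(t)$ and $b$ the unique polynomial with $t^{s-1}b(t^{-1})=b(t)$ for which this holds (both exist and are unique for any polynomial of degree $\leqslant s$). Because $\lozenge P$ has an interior lattice point, its codegree is $1$, so $s$ is the degree of $\delta(\lozenge P,t)$ and $\delta_s=b(P)\geqslant 1$. The first step is the elementary observation that a sequence $(\delta_0,\dots,\delta_s)$ with $\delta_0\geqslant 0$ is alternatingly increasing if and only if $a$ and $b$ both have nonnegative coefficients and are unimodal: using $a_i=a_{s-i}$ and $b_i=b_{s-1-i}$ one gets $\delta_i-\delta_{s-i}=b_{i-1}-b_i$ and $\delta_{s-i}-\delta_{i+1}=a_i-a_{i+1}$, so the defining chain $\delta_0\leqslant\delta_s\leqslant\delta_1\leqslant\delta_{s-1}\leqslant\cdots$ is exactly the conjunction of $b_0\geqslant 0$, $a_0\leqslant a_1\leqslant\cdots$ and $b_0\leqslant b_1\leqslant\cdots$ up to the middle, which for symmetric sequences means unimodality together with nonnegativity.

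Since the symmetric decomposition is linear in $\delta$, by Proposition~\ref{delta par P} it is enough to decompose each building block, $A(d,j,t)=a^{(j)}(t)+t\,b^{(j)}(t)$ (always with respect to $s=d+1$), after which $a=\sum_{\emptyset\subseteq G\subseteq P}b(G)\,a^{(\dim G)}$ and $b=\sum_{\emptyset\subseteq G\subseteq P}b(G)\,b^{(\dim G)}$. Using Lemma~\ref{A(k,t)=delta(triangle line)}, Lemma~\ref{mirror with} and the fact from Theorem~\ref{delta(triangle k line i)} that $D_j:=\delta(\triangle^{d-j}\times\ell^{2j+1-d},t)$ is alternatingly increasing --- say with symmetric decomposition $D_j=\alpha_j+t\beta_j$, with $\alpha_j,\beta_j$ nonnegative and unimodal --- one computes: $a^{(-1)}=(1+t)\Eul(d,t)$ and $b^{(-1)}=-\Eul(d,t)$; $a^{(d)}=0$ and $b^{(d)}=\Eul(d,t)$; for $0\leqslant j\leqslant d-1$ with $2j+1\geqslant d$, $a^{(j)}=2^{d-j}\,t\,\alpha_j$ and $b^{(j)}=2^{d-j}\,t\,\beta_j$; and, reflecting the remaining $j$ into this range via Lemma~\ref{mirror with}, $a^{(j)}=2^{j+1}\,t\bigl(\alpha_{d-1-j}+(1+t)\beta_{d-1-j}\bigr)$ and $b^{(j)}=-2^{j+1}\,t\,\beta_{d-1-j}$. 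In every case $a^{(j)}$ is a nonnegative unimodal symmetric polynomial of center $(d+1)/2$, hence so is $a$; this reproves, more transparently, the shape of $a$ underlying Theorem~\ref{LatticeParUnimodal}, and in particular $a$ is unimodal.

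It remains to show $b$ is nonnegative and unimodal. Writing $B_i:=\sum_{\dim G=i}b(G)$ and using that $\beta_j=0$ when $d$ is odd and $j=(d-1)/2$ (because $\delta(\triangle^{(d+1)/2},t)$ is symmetric), the formula for $b$ collapses to
\[
 b(t)\;=\;\bigl(b(P)-1\bigr)\,\Eul(d,t)\;+\!\!\sum_{\substack{0\leqslant j\leqslant d-1\\ 2j+1>d}}\!\!\bigl(B_j-B_{d-1-j}\bigr)\,2^{d-j}\,t\,\beta_j .
\]
Each of $\Eul(d,t)$ and $t\beta_j$ here is a nonnegative unimodal symmetric polynomial of center $d/2$, and $b(P)-1\geqslant 0$. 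So the whole problem reduces to showing that, as written, $b(t)$ lies in the cone of nonnegative unimodal symmetric polynomials of center $d/2$; since the coefficients $B_j-B_{d-1-j}$ may a priori be negative, this is a combinatorial inequality among the face-sums $B_i$ that the existence of an interior lattice point must force --- for instance it suffices to prove $B_j\geqslant B_{d-1-j}$ whenever $2j+1>d$ (equivalently, that $\sum_{|S|=k}b\bigl(\langle v_i:i\in S\rangle\bigr)$ grows appropriately as $k$ increases past $(d+1)/2$), for then $b(t)$ is a genuine nonnegative combination of nonnegative unimodal symmetric polynomials of the same center, and the theorem follows from the first step.

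\textbf{Main obstacle.} The last step is the heart of the matter. An interior point $w=\sum_i\mu_i v_i$ with all $0<\mu_i<1$ is a rigid object: it forces every ``large'' coordinate sub-box $\langle v_i:i\in S\rangle$ to be fat, hence lattice-point rich, which is what should dominate the ``small'' complementary boxes. I would prove the needed inequality by using $w$ to build, for each vertex-subset $S$ with $|S|>|S^{c}|$, an injection from the lattice points of $\bx(\langle v_i:i\in S^{c}\rangle)$ into those of $\bx(\langle v_i:i\in S\rangle)$, or else by a covolume/index comparison among the sublattices $\langle v_i:i\in S\rangle_{\Z}$. That the interior-point hypothesis is indispensable is visible already in Example~\ref{voorbeeld}, which has no interior point and a non-alternatingly-increasing $\delta$-vector. (A possible shortcut to check: whether $\lozenge P$ with $b(P)=1$ is always a translate of a reflexive polytope --- then $\delta(\lozenge P)$ is symmetric and one concludes by Theorem~\ref{LatticeParUnimodal} --- though the case $b(P)\geqslant 2$ would still need the combinatorial input above.)
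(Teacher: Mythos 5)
Your reduction via the symmetric decomposition $\delta(\lozenge P,t)=a(t)+t\,b(t)$ is correct and genuinely different from the paper's argument: the equivalence of ``alternatingly increasing'' with ``$a$ and $b$ both nonnegative and unimodal'', the computation of the pieces $a^{(j)},b^{(j)}$ of each $A(d,j,t)$, and the observation that $a$ is automatically a nonnegative combination of nonnegative unimodal symmetric polynomials of center $(d+1)/2$ all check out. But the proof is not complete: everything has been funnelled into the single inequality $B_j\geqslant B_{d-1-j}$ for $2j+1>d$, and that inequality is exactly where the interior-point hypothesis must do its work --- you state it, explain why it should be true, and describe how you \emph{would} prove it, but you do not prove it. This is precisely the content that the paper supplies through the map $\varphi$ of Lemma~\ref{lemma 2}, so the heart of the argument is missing.

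There is a second, more subtle problem: the individual inequality $B_j\geqslant B_{d-1-j}$ may be stronger than what an interior box point actually gives you. The natural injection built from $v'=\sum_i\alpha_i v_i$ (adding $\alpha_i$ to each coordinate and taking fractional parts) sends a box point of a face of dimension $r$ to a box point of a face of dimension \emph{at least} $n-1-r$, not exactly $n-1-r$; a coordinate $\lambda_i+\alpha_i$ only drops out when it is an integer. So what you get is a matching of each ``low'' box point with a distinct ``high'' box point of possibly larger complementary dimension (this is the precise statement of Lemma~\ref{lemma 2}), i.e.\ cumulative rather than levelwise inequalities among the $B_j$. With only that, $b(t)$ is no longer visibly a nonnegative combination of the $t\beta_j$'s of a common center: after pairing a box point in dimension $i$ with one in dimension $d-1-i+k$, the contribution to $b$ is $2^{i+1-k}t\beta_{d-1-i+k}-2^{i+1}t\beta_{d-1-i}$, and you need the $2^k$-weighted coefficient comparisons of Corollary~\ref{cor1} (below the middle one polynomial dominates, above the middle the other does) to see that each such pair still yields something nonnegative and unimodal. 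So even granting the injection, your argument as written skips an ingredient the paper cannot avoid. Finally, the parenthetical shortcut does not work: $b(P)=1$ alone does not make $\lozenge P$ reflexive --- by the last Proposition of Section~\ref{section2} you also need $b(G)\leqslant 1$ for every proper face $G$.
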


\begin{proof}
We use the formula $\delta(\lozenge P, t) = \sum_{\emptyset\subseteq G \subseteq P} b(G)\,A(\dim P, \dim G, t)$. Since $\lozenge P$ has an interior point, $\deg (\lozenge P)= n+1$. Hence, we want to prove $\delta_0 \leqslant \delta_{n+1} \leqslant \delta_1\leqslant \cdots $. We notice that the coefficients of every $A(n,i,t)$ with $i \geqslant\lfloor(n/2)\rfloor $ exhibit such behaviour. The only problems can arise when there are nonzero $b(G)$ with $-1 \leqslant \dim G \leqslant \lfloor n/2\rfloor -1$. Since $\lozenge P$ has at least one interior point, we see that no problems can come from $A(n, -1,t)$. Because then there is also (at least one) $A(n,n,t)$ in the sum. Hence, it follows from Lemma~\ref{A(k,t)=delta(triangle line)} and symmetry that $A(n, -1,t)+A(n,n,t)$ has symmetric and unimodal coefficients, in particular they are alternatingly increasing.\\
From Lemma~\ref{lemma 2} we know that for every $A(n,i,t)$ in the sum with $0\leqslant i \leqslant \lfloor n/2\rfloor -1$ there will also be an $ A(n, n-1-i+k)$ with $0\leqslant k\leqslant i+1$. Therefore it is enough to prove that every $A(n,i,t)+A(n, n-1-i+k)$ has alternatingly increasing coefficients. From Lemma ~\ref{mirror with} we know that $A(n, i ,t)$ is the mirror image of $A(n, n-1-i)$. Then, using Lemma~\ref{A(k,t)=delta(triangle line)}(3) we can write $A(n,i,t)+A(n, n-1-i+k)$ in vector form as:
\begin{multline*} 
(0,2^{i+1}\delta_{n-1}(\triangle^{i+1} \times \ell^{n+1-2(i+1)})+2^{i+1-k}\delta_0(\triangle^{i+1-k} \times \ell^{n+1-2(i+1-k)}),
\\
\ldots,  2^{i+1}\delta_{0}(\triangle^{i+1} \times \ell^{n+1-2(i+1)})+2^{i+1-k}\delta_{n-1}(\triangle^{i+1-k} \times \ell^{n+1-2(i+1-k)}),0 ).
\end{multline*}
So, it is enough to prove for all $1\leqslant i\leqslant \lfloor n/2\rfloor $ and $0 \leqslant k \leqslant i$ that 
\begin{enumerate}
\item for $0 \leqslant j \leqslant \lfloor n/2\rfloor -1$:
	\begin{center}
		\begin{tabular}{l} 
	 	$2^k\delta_{n-1-j}(\triangle^{i}\times \ell^{n+1-2i})+ \delta_j(\triangle^{i-k} \times 	\ell^{n+1-2(i-			k)} )$\\
		$\quad\leqslant 2^k\delta_{j}(\triangle^{i}\times \ell^{n+1-2i})+ \delta_{n-1-j}(\triangle^{i-k} \times 				\ell^{n+1-2(i-k)} )$ and  
		\end{tabular}
	\end{center}
\item for $1 \leqslant j \leqslant \lfloor n/2\rfloor$:
	\begin{center}
		\begin{tabular}{l} 
	$2^k\delta_{j-1}(\triangle^{i}\times \ell^{n+1-2i})+ \delta_{n-j}(\triangle^{i-k} \times \ell^{n+1-2(i-k)} 	)$\\
	$\quad\leqslant 2^k\delta_{n-1-j}(\triangle^{i}\times \ell^{n+1-2i})+ \delta_{j}(\triangle^{i-k} \times 				\ell^{n+1-2(i-k)} ).\ \ \ $
		\end{tabular}
	\end{center}
\end{enumerate}

The second claim follows from the fact that every $\delta(\triangle^a\times \ell^{n+1-2a})$ is alternatingly increasing. To prove the first claim, it is enough to prove for $0 \leqslant j \leqslant \lfloor n/2\rfloor -1$ that $\delta_{j}(\triangle^{i-k} \times 	\ell^{n+1-2(i-k)})\leqslant 2^k \delta^{j}(\triangle^{i}\times \ell^{n+1-2i})$ and $2^k \delta_{n-1-j}(\triangle^{i}\times \ell^{n+1-2i}) \leqslant \delta_{n-1-j}(\triangle^{i-k} \times \ell^{n+1-2(i-k)} )$. This will follow from Corollary~\ref{cor1}. 
\end{proof}

\begin{Lem} For $1\leqslant i\leqslant (n+1)/2$ we have: 
\begin{align*}
\delta_{j}(\triangle^{i-1}\times \ell^{n+1-2(i-1)}) & \leqslant & 2 \delta_j(\triangle^{i}\times \ell^{n+1-2i} ) &\  when\ 0\leqslant j\leqslant \lfloor \frac{n-1}{2}\rfloor \textit{ and} \\
\delta_{j}(\triangle^{i-1}\times \ell^{n+1-2(i-1)}) &\geqslant & 2 \delta_j(\triangle^{i}\times \ell^{n+1-2i} )  &\ when\ \lfloor \frac{n-1}{2}\rfloor +1 \leqslant j \leqslant n-1.
\end{align*}
\end{Lem}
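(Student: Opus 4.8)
The plan is to reduce the two claimed inequalities to a single monotonicity statement for the $\delta$-vector of the lower-dimensional polytope $\triangle^{i-1}\times\ell^{\,n+2-2i}$, and then apply Theorem~\ref{delta(triangle k line i)}. The point is that the two polytopes in the statement both arise from a common factor $Q$ by multiplying once with $\triangle$ in one case and with $\ell^2$ in the other, and the discrepancy between $\triangle$ and $\ell^2$ is controlled by $\ell$.

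Concretely, set $Q=\triangle^{i-1}\times\ell^{\,n+1-2i}$, a lattice polytope of dimension $n-1$; the hypothesis $i\leqslant (n+1)/2$ guarantees $n+1-2i\geqslant 0$. Then $\triangle^{i-1}\times\ell^{\,n+1-2(i-1)}=Q\times\ell^2$ and $\triangle^{i}\times\ell^{\,n+1-2i}=Q\times\triangle$, both of dimension $n+1$. The elementary identity $2\binom{m+2}{2}=(m+1)(m+2)=(m+1)^2+(m+1)$, multiplied by $\sharp(mQ)$, gives $2\,\sharp\bigl(m(Q\times\triangle)\bigr)=\sharp\bigl(m(Q\times\ell^2)\bigr)+\sharp\bigl(m(Q\times\ell)\bigr)$ for all $m\geqslant 0$, hence $2\,\Ehr(Q\times\triangle,t)=\Ehr(Q\times\ell^2,t)+\Ehr(Q\times\ell,t)$. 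Clearing denominators (the first two polytopes have dimension $n+1$, the last dimension $n$) yields
\[ 2\,\delta(Q\times\triangle,t)=\delta(Q\times\ell^2,t)+(1-t)\,\delta(Q\times\ell,t). \]
Writing $g=(g_0,\dots,g_n)=\delta(Q\times\ell)=\delta(\triangle^{i-1}\times\ell^{\,n+2-2i})$ and comparing the coefficient of $t^j$ on both sides (with $g_{-1}:=0$) gives
\[ \delta_j(\triangle^{i-1}\times\ell^{\,n+1-2(i-1)})-2\,\delta_j(\triangle^{i}\times\ell^{\,n+1-2i})=g_{j-1}-g_j. \]

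So the Lemma is equivalent to: $g_{j-1}\leqslant g_j$ for $0\leqslant j\leqslant\lfloor(n-1)/2\rfloor$, and $g_{j-1}\geqslant g_j$ for $\lfloor(n-1)/2\rfloor+1\leqslant j\leqslant n-1$. Now $g$ is the $\delta$-vector of $\triangle^{k}\times\ell^{\,i'}$ with $k=i-1\geqslant 0$ and $i'=n+2-2i\geqslant 1$, so by Theorem~\ref{delta(triangle k line i)} it is alternatingly increasing, hence unimodal, and attains its maximum at the index $\lfloor(2k+i'-1)/2\rfloor=\lfloor(n-1)/2\rfloor$ (note $2k+i'-1=n-1$). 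A unimodal sequence whose maximum is attained at index $p$ is non-decreasing on $\{0,\dots,p\}$ and non-increasing on $\{p,p+1,\dots\}$, which is precisely what is needed; the entries of $g$ beyond its degree vanish, so the non-increasing part causes no problem up to $j=n-1$.

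I do not expect a real obstacle: the argument is short once the Ehrhart identity is recorded. The only steps needing care are bookkeeping — verifying that the index ranges in the statement line up exactly with "non-decreasing up to the top, non-increasing after the top" for $g$, and the degenerate cases ($i=1$, where $g$ is a shifted Eulerian polynomial that may have two equal central maxima, and $i=(n+1)/2$, where $\ell^{\,n+1-2i}$ collapses to a point) — but in each of these the required monotonicity of $g$ still holds verbatim. One could instead try to iterate Lemma~\ref{delta(Pxl)}, but that lemma only handles Cartesian products with $\ell$, whereas here one must compare a $\triangle$-factor against an $\ell^2$-factor, so passing through the Ehrhart series as above is the cleaner route.
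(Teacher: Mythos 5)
Your proof is correct and follows essentially the same route as the paper: the identity $2\,\delta(Q\times\triangle,t)=\delta(Q\times\ell^2,t)+(1-t)\,\delta(Q\times\ell,t)$ (with $Q=\triangle^{i-1}\times\ell^{\,n+1-2i}$) is exactly the paper's computation $2\delta(\triangle^i\times\ell^{n+1-2i},t)-\delta(\triangle^{i-1}\times\ell^{n+1-2(i-1)},t)=(1-t)\,\delta(\triangle^{i-1}\times\ell^{n-2(i-1)},t)$, obtained there by the same manipulation $2\binom{m+2}{2}=(m+1)^2+(m+1)$ inside the Ehrhart series. The conclusion via unimodality of $\delta(\triangle^{i-1}\times\ell^{\,n+2-2i})$ with top at $\lfloor(n-1)/2\rfloor$ from Theorem~\ref{delta(triangle k line i)} is also the paper's.
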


\begin{proof}
Let us look at the difference $2\delta(\triangle^i\times\ell^{n+1-2i},t)-\delta(\triangle^{i-1} \times \ell^{n+1-2(i-1)},t )$:
\begin{eqnarray*}
\lefteqn{2 \delta(\triangle^i\times\ell^{n+1-2i},t)-\delta(\triangle^{i-1} \times \ell^{n+1-2(i-1)},t )}\\
&=& (1-t)^{n+2} \sum_{m=0}^{\infty}t^m\Big[2 \binom{\scriptstyle{m+2}}{\scriptstyle{2}}^{i}(m+1)^{n+1-2i} - \binom{\scriptstyle{m+2}}{\scriptstyle{2}}^{i-1}(m+1)^{n+1-2(i-1)}   \Big]\\
&=&(1-t)^{n+2}\sum_{m=0}^{\infty} t^m\binom{\scriptstyle{m+2}}{\scriptstyle{2}}^{i-1}(m+1)^{n+2-2i}\\
&=&(1-t)\delta(\triangle^{i-1}\times \ell^{n-2(i-1)},t).
\end{eqnarray*}
Knowing this, the lemma follows from the fact that $\delta(\triangle^{i-1}\times \ell^{n-2(i-1)})$ is unimodal with top in $\lfloor(n-1)/2\rfloor$ (see Theorem~\ref{delta(triangle k line i)}).
\end{proof}

\begin{Cor}\label{cor1} 
For all $0\leqslant i\leqslant (n+1)/2$ and $0\leqslant k \leqslant i$ we have:
\begin{align*}
\delta_{j}(\triangle^{i-k}\times \ell^{n+1-2(i-k)}) & \leqslant & 2^k \delta_j(\triangle^{i}\times \ell^{n+1-2i} ) &\  when\ 0\leqslant j\leqslant \lfloor \frac{n-1}{2}\rfloor \textit{ and}\\
 \delta_{j}(\triangle^{i-k}\times \ell^{n+1-2(i-k)}) &\geqslant & 2^k \delta_j(\triangle^{i}\times \ell^{n+1-2i} ) &\ when\ \lfloor \frac{n-1}{2}\rfloor +1 \leqslant j \leqslant n-1.
\end{align*}
\end{Cor}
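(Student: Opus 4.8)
The plan is to obtain Corollary~\ref{cor1} from the Lemma immediately preceding it by a short telescoping argument; all the substance already sits in that Lemma (and, behind it, in Theorem~\ref{delta(triangle k line i)}, which supplies the unimodality of $\delta(\triangle^{i-1}\times\ell^{n-2(i-1)})$ used in its proof). Fix $n$ and $j$, and for $0\leqslant i'\leqslant (n+1)/2$ abbreviate $f_{i'} := \delta_j(\triangle^{i'}\times \ell^{n+1-2i'})$; for such $i'$ the exponent $n+1-2i'$ is nonnegative, so these are honest lattice polytopes and $f_{i'}$ is well defined. In this notation the Lemma says precisely that for every integer $i'$ with $1\leqslant i'\leqslant (n+1)/2$ one has $f_{i'-1}\leqslant 2f_{i'}$ when $0\leqslant j\leqslant\lfloor(n-1)/2\rfloor$, and $f_{i'-1}\geqslant 2f_{i'}$ when $\lfloor(n-1)/2\rfloor+1\leqslant j\leqslant n-1$.

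Now fix $i$ with $0\leqslant i\leqslant (n+1)/2$ and proceed by induction on $k$ for $0\leqslant k\leqslant i$. For $k=0$ the asserted inequality is $f_i\leqslant f_i$ (respectively $f_i\geqslant f_i$), which is trivial. For the step, let $1\leqslant k\leqslant i$ and assume the claim for $k-1$, i.e.\ $f_{i-(k-1)}\leqslant 2^{k-1}f_i$ in the range $0\leqslant j\leqslant\lfloor(n-1)/2\rfloor$. Since $1\leqslant i-k+1\leqslant i\leqslant (n+1)/2$, the Lemma applied with $i'=i-k+1$ gives $f_{i-k}\leqslant 2f_{i-k+1}$; chaining this with the induction hypothesis yields $f_{i-k}\leqslant 2\cdot 2^{k-1}f_i=2^kf_i$, as wanted. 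The complementary range $\lfloor(n-1)/2\rfloor+1\leqslant j\leqslant n-1$ is handled identically with every inequality reversed.

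The one point that deserves a moment of care---rather than an actual obstacle---is the bookkeeping on indices: each application of the Lemma in the chain uses a parameter $i'\in\{i-k+1,\dots,i\}$, and every intermediate polytope $\triangle^{i'}\times\ell^{n+1-2i'}$ for $i'\in\{i-k,\dots,i\}$ must be a genuine lattice polytope, which holds because $i'\leqslant i\leqslant (n+1)/2$ forces $n+1-2i'\geqslant 0$. With this the induction goes through verbatim, and Corollary~\ref{cor1} follows.
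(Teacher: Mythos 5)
Your proposal is correct and matches the paper's approach: the paper simply states that the corollary ``follows immediately from the previous lemma,'' and your telescoping induction on $k$ is precisely the iteration of that lemma which the authors had in mind. The index bookkeeping you include is accurate but routine.
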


\begin{proof}
This follows immediately from the previous lemma.
\end{proof}

Suppose $P= \langle v_0,...,v_n\rangle$ is an $n$-dimensional simplex in $\R^{n+1}$ such that $b(P)\geqslant 1$. Say $v'=\sum_{i=0}^n \alpha_i v_i$ where $0<\alpha_i <1$, is such a lattice point in the box of $P$. Then we define a map 
\begin{eqnarray*}
\varphi: \bigcup_{\substack{G\subseteq P\\0\leqslant \dim G \leqslant \lfloor \frac{n}{2}\rfloor -1}} \bx(G)\cap\Z^{n+1} & \longrightarrow & \bigcup_{\substack{G\subseteq P\\n-\lfloor \frac{n}{2}\rfloor \leqslant \dim G}} \bx(G)\cap\Z^{n+1} \setminus \{v'\} \\
\sum_{i=0}^n \lambda_i v_i & \longmapsto &\sum_{i=0}^n \{\lambda_i+\alpha_i\} v_i,
\end{eqnarray*}
where $\{ \cdot \}$ denotes the fractional part of a real number.

\begin{Lem}\label{lemma 2} The map $\varphi$ defined above is injective and if $v\in \bx(G)\cap \Z^{n+1}$ where $\dim G =r$, then  $\varphi(v)\in \bx(G')\cap\Z^{n+1}$ where $G'$ is a face of $P$ with $\dim G' \geqslant n-1-r$.
\end{Lem}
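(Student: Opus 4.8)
The plan is to analyze the map $\varphi$ coordinate by coordinate and exploit the elementary arithmetic of fractional parts. First I would fix a lattice point $v = \sum_{i=0}^n \lambda_i v_i$ with all $\lambda_i \in [0,1)$ (using that $\{v_0,\dots,v_n\}$ is a $\Z$-basis of the lattice it generates, so the representation is unique and the $\lambda_i$ are rational), and observe that $v \in \bx(G)$ for the face $G = \langle v_i : \lambda_i \neq 0\rangle$; thus $\dim G = r$ means exactly $r+1$ of the $\lambda_i$ are nonzero. The image $\varphi(v) = \sum_i \{\lambda_i + \alpha_i\} v_i$ lies in $\bx(G')$ where $G' = \langle v_i : \{\lambda_i + \alpha_i\} \neq 0\rangle$. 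The key pointwise observation is that $\{\lambda_i + \alpha_i\} = 0$ can only happen when $\lambda_i + \alpha_i \in \Z$; since $0 \le \lambda_i < 1$ and $0 < \alpha_i < 1$, this forces $\lambda_i + \alpha_i = 1$, i.e. $\lambda_i = 1 - \alpha_i \neq 0$. So a coordinate can only "drop out" of the support when $\lambda_i$ was nonzero to begin with — meaning the coordinates $i$ with $\lambda_i = 0$ always stay in the support of $\varphi(v)$ (there $\{\lambda_i+\alpha_i\} = \alpha_i \neq 0$), while among the $r+1$ coordinates with $\lambda_i \neq 0$, at most $r+1$ of them could vanish. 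Counting: $\dim G' + 1 = \#\{i : \{\lambda_i+\alpha_i\}\neq 0\} \ge (n+1) - (r+1) = n - r$, hence $\dim G' \ge n-1-r$, which is the claimed bound.

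Next I would check well-definedness, i.e. that $\varphi$ actually lands in the stated codomain. Since $0 \le \dim G \le \lfloor n/2 \rfloor - 1$, the bound gives $\dim G' \ge n-1-r \ge n-1-(\lfloor n/2\rfloor - 1) = n - \lfloor n/2 \rfloor$, so indeed $\varphi(v) \in \bx(G')$ for some $G'$ of dimension at least $n - \lfloor n/2 \rfloor$. It also remains to verify $\varphi(v) \neq v'$: if $\varphi(v) = v' = \sum_i \alpha_i v_i$ then $\{\lambda_i + \alpha_i\} = \alpha_i$ for all $i$, forcing $\lambda_i \in \Z$, hence $\lambda_i = 0$ for all $i$, so $v = 0$; but then $v$ lies in $\bx(G)$ only for $G = \emptyset$, i.e. $\dim G = -1 < 0$, which is excluded from the domain. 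So $\varphi(v) \neq v'$ for every $v$ in the domain.

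Finally, injectivity: if $\varphi(v) = \varphi(w)$ with $v = \sum \lambda_i v_i$, $w = \sum \mu_i v_i$, then $\{\lambda_i + \alpha_i\} = \{\mu_i + \alpha_i\}$ for each $i$, so $\lambda_i - \mu_i \in \Z$; since both $\lambda_i, \mu_i \in [0,1)$ this forces $\lambda_i = \mu_i$, hence $v = w$. This last part is essentially immediate from the uniqueness of coordinates. I expect the only genuinely delicate point to be the support-counting argument — making sure one correctly tracks which coordinates can and cannot leave the support under $i \mapsto \{\lambda_i + \alpha_i\}$ — but once the observation "$\{\lambda_i+\alpha_i\}=0 \Rightarrow \lambda_i \neq 0$" is isolated, the dimension count is a short combinatorial step. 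Everything else (well-definedness, avoidance of $v'$, injectivity) is routine bookkeeping with fractional parts.
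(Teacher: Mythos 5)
Your argument is correct. Note that the paper states this lemma without proof, so there is nothing of the authors' to compare against; what you wrote is the natural (and presumably intended) argument. The one observation $\{\lambda_i+\alpha_i\}=0\Rightarrow\lambda_i=1-\alpha_i\neq 0$ does all the work: the indices with $\lambda_i=0$ stay in the support, giving $\dim G'+1\geqslant (n+1)-(r+1)$, i.e.\ $\dim G'\geqslant n-1-r$, and combined with $r\leqslant\lfloor n/2\rfloor-1$ this puts $\varphi(v)$ in the stated codomain; injectivity and $\varphi(v)\neq v'$ follow from $\lambda_i-\mu_i\in\Z$ and $\lambda_i\in\Z$ together with $\lambda_i,\mu_i\in[0,1)$. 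The only point your plan does not spell out is that $\varphi(v)$ is again a lattice point, which is needed for it to lie in $\bx(G')\cap\Z^{n+1}$; this is one line, since $\varphi(v)=v+v'-\sum_i\lfloor\lambda_i+\alpha_i\rfloor v_i$ and both $v+v'$ and the $v_i$ are integral.
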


The previous results inspire us to ask the following addendum to Question~\ref{Hoofdvraag}.

\begin{Quest}
Is it true that the $\delta$-vector of an integrally closed polytope with an interior lattice point is alternatingly increasing\,?
\end{Quest}

To conclude this section, we give a criterion for a lattice parallelepiped to be reflexive.

\begin{Prop} 
Set $P= \langle v_0,...,v_n\rangle$ in $\R^{n+1}$. Then $\lozenge P$ is a translate of a reflexive polytope if and only if 
$b(P)=1$ and $b(G)\leqslant 1$ for all other faces $G$ of $P$.
\end{Prop}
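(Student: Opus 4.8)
The plan is to use the description of reflexivity via symmetry of the $\delta$-vector: by \cite{HibiCombinatorica}, $\lozenge P$ is a translate of a reflexive polytope if and only if $\delta(\lozenge P)$ is symmetric, i.e. $\delta_i = \delta_{\dim P + 1 - i}$ for all $i$. So the whole statement reduces to characterizing when the polynomial $\delta(\lozenge P, t)$ — which we have an explicit formula for — is palindromic of degree $\dim P + 1$. Write $n = \dim P$ and recall from Proposition~\ref{delta par P} that $\delta(\lozenge P, t) = \sum_{\emptyset \subseteq G \subseteq P} b(G)\, A(n, \dim G, t)$, a \emph{nonnegative} combination of the $A$-polynomials since every $b(G) \geqslant 0$.

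First I would record the degree and low-order behavior of each $A(n,j,t)$. By Lemma~\ref{A(k,t)=delta(triangle line)}(1), $A(n,-1,t) = \delta(\ell^{n+1},t)$ has degree exactly $n$ and constant term $1$; by Lemma~\ref{A(k,t)=delta(triangle line)}(2), $A(n,n,t) = t\,\delta(\ell^{n+1},t)$ has degree exactly $n+1$; and for $0 \leqslant j \leqslant n-1$, Remark~\ref{graad} gives $\deg A(n,j,t) \leqslant n$ while $t \mid A(n,j,t)$. Since $b(P) = b(\langle v_0,\dots,v_n\rangle)$ corresponds to $G = P$ (so $\dim G = n$), the term $b(P)\,A(n,n,t)$ is the \emph{only} source of a $t^{n+1}$ contribution, and its coefficient is $b(P)$. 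Thus $\delta_{n+1} = b(P)$. On the other hand $\delta_0 = 1$ always (it equals $b(\emptyset)\cdot 1$, the constant term of $A(n,-1,t)$, all other $A$'s being divisible by $t$). Hence symmetry forces $b(P) = 1$ as a necessary condition.

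Next, granting $b(P) = 1$, I would use Lemma~\ref{mirror with}, which says $t^{n+1} A(n,j,t^{-1}) = A(n, n-1-j, t)$ for $0 \leqslant j \leqslant n-1$, together with the self-symmetry of $A(n,-1,t) + A(n,n,t)$ (this sum equals $(1+t)\delta(\ell^{n+1},t)$, which is palindromic of degree $n+1$ by symmetry of the Eulerian numbers). Grouping the faces of $P$ by dimension and pairing $G$ of dimension $j$ with faces of dimension $n-1-j$, one sees that $\delta(\lozenge P,t)$ is palindromic precisely when the total weight $\sum_{\dim G = j} b(G)$ equals the total weight $\sum_{\dim G' = n-1-j} b(G')$ for each $j$ in the range $0 \leqslant j \leqslant n-1$. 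The cleanest way to exploit this is Lemma~\ref{lemma 2}: the map $\varphi$ injects the lattice points of low-dimensional boxes into those of high-dimensional boxes (minus $v'$), and when $b(P)=1$ one should check $\varphi$ is in fact a dimension-reversing \emph{bijection} (matching $\dim G = j$ with $\dim G' = n-1-j$), which forces all the $b(G)$ for $G \neq P$ to be $0$ or $1$ — because any $b(G) \geqslant 2$ would overload one side of the pairing and break the equality of weights, hence break symmetry. Conversely, if $b(P) = 1$ and every other $b(G) \leqslant 1$, the same bijection argument shows the weights match in every degree, so $\delta(\lozenge P,t)$ is palindromic.

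The main obstacle is the converse direction's bookkeeping: turning ``weights match degree-by-degree'' into the pointwise constraint $b(G) \leqslant 1$, and vice versa. One has to be careful that a surplus in $b(G)$ for some $G$ of low dimension cannot be silently compensated by a deficit elsewhere — this is exactly where the \emph{injectivity} of $\varphi$ (not merely a count) does the work, since it pins down which face each point lands in via the bound $\dim G' \geqslant n - 1 - \dim G$. I would also need to double-check the boundary dimensions ($j$ near $0$ and near $n$, and the parity of $n$ when $j = n-1-j$) separately, but those are routine once the bijection is set up. If the bijection statement is more delicate than expected, a fallback is to argue purely numerically: compare the coefficient of $t^1$ in $\delta(\lozenge P,t)$, which one computes from the formula to be $(\text{number of vertices}) \cdot [\text{linear coeff of } A(n,n,t)] + \sum_{\text{edges } E} b(E) \cdot 1 + \dots$, against the coefficient of $t^n$, and iterate; but the $\varphi$-bijection is the conceptually right tool and I would pursue it first.
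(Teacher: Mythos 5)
Your reduction of both directions to palindromicity of $\delta(\lozenge P,t)$ has a genuine gap in the necessity direction. The step from ``$b(P)=\delta_{n+1}=\delta_0=1$'' is fine, but palindromicity of $\sum_G b(G)A(n,\dim G,t)$ can only ever yield the \emph{aggregate} equalities $\sum_{\dim G=j}b(G)=\sum_{\dim G=n-1-j}b(G)$ (and even that requires the linear independence of the $A(n,j,t)$, which you use but do not check). These aggregates say nothing about the self-paired dimension $j=(n-1)/2$: for $n=1$ one has $A(1,0,t)=2t$, which is its own mirror under Lemma~\ref{mirror with}, so $\delta(\lozenge P,t)$ is palindromic as soon as $b(P)=1$ no matter how large $b(\langle v_0\rangle)$ is; for $n=3$ the same happens with edges. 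Nor do they exclude a face with $b(G)=2$ balanced by two faces of the complementary dimension with $b=1$ each. In other words, the pointwise bound $b(G)\leqslant 1$ is strictly stronger than anything the $\delta$-polynomial bookkeeping can see, and the content of the proposition is precisely that the stronger statement nevertheless holds; this forces a genuinely geometric argument. The injectivity of $\varphi$ in Lemma~\ref{lemma 2} does not supply it: $\varphi$ is an injection between unions of boxes with only the inequality $\dim G'\geqslant n-1-r$, and no injection between unions can be converted into a bound on an individual $b(G)$, let alone for faces of dimension $\geqslant\lfloor n/2\rfloor$, which are not even in its domain. The paper instead proves necessity directly: the unique interior point is $(v_0+\cdots+v_n)/2$, the facets of $\lozenge P$ come in parallel pairs $f_i=0$ and $f_i=\det(v_0,\ldots,v_n)$, and a second point in some $\bx(G)$ would have a coordinate $\lambda_i$ with $0<\lambda_i<1/2$, hence would lie on a lattice hyperplane $f_i=\lambda_i\det(v_0,\ldots,v_n)$ strictly between two parallel facets, contradicting reflexivity.

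Your sufficiency direction is in outline the paper's (establish the weight equalities, then apply Lemma~\ref{mirror with}), but the paper gets the weights to match from the complementary-face pairing $\langle v_0,\ldots,v_r\rangle\leftrightarrow\langle v_{r+1},\ldots,v_n\rangle$: under the hypothesis each unique box point is fixed by the involution $\lambda_i\mapsto 1-\lambda_i$, hence equals the half-sum of the vertices of its face, and subtracting it from $(v_0+\cdots+v_n)/2$ produces the box point of the complementary face. This gives an exact dimension-reversing bijection of faces with $b=1$, which is what the coefficient comparison needs; upgrading $\varphi$ to such a bijection is not done in the paper and would require additional work.
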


\begin{proof}
First suppose $\lozenge P$ is a translate of a reflexive polytope. Then by definition $b(P)$ must be equal to $1$ and this unique interior lattice point must be equal to $(v_0+ \cdots + v_n)/2$. We can also write down the equations for the facets of $\lozenge P$. Denote by $f_i(x_0,\ldots ,x_n)$ the determinant of the matrix whose $j$th column contains the coordinates of $v_j$ for $j\neq i$ and whose $i$th column contains $x_0,\ldots ,x_n$. Then all facets of $\lozenge P$ are given by
\[ F_i :=\{ f_i(x_0,\ldots ,x_n) = 0 \} \quad \text{or} \quad F'_i := \{ f_i(x_0,\ldots ,x_n) = \det(v_0, \ldots, v_n)  \}, \]
for $i=0,\ldots , n$. 
The unique interior point lies on every hyperplane given by the equation $f_i=\det(v_0,\ldots,v_n)/2$. The fact that $P$ is reflexive means that for every $0\leqslant i\leqslant n$ and every $0<\lambda< 1/2$ there can be no hyperplane $f_i=\lambda \det(v_0,\ldots,v_n)$ that contains a point with integer coefficients. Now suppose there is a face $G=\langle v_0,\ldots,v_r\rangle$ of $P$ that contains more than one lattice point in its box. Then there must be a point $v$ in $\bx(G)\cap\Z^{n+1}$ that can be written as $v=\sum_{i=0}^r \lambda_i v_i$ with $0<\lambda_i<1$ for all $i$ and for which at least one $\lambda_i$ is different from $1/2$. We can assume that $ 0< \lambda_i < 1/2$. Then this lattice point lies on the hyperplane $f_i=\lambda_i \det(v_0,\ldots,v_n)$, which is a contradiction.

Now suppose $b(P)=1$ and all other faces $G$ of $P$ have at most one lattice point in their box. To show that $\lozenge P$ is a translate of a reflexive polytope we show that $\delta(\lozenge P)$ is symmetric. It is easy to check that in this situation $b(\langle v_0,\ldots,v_r\rangle)=1$ if and only if $b(\langle v_{r+1},\ldots,v_n\rangle)=1$. This way we find for every $0\leqslant r\leqslant n$, that 
\[ \sum_{\substack{G\subseteq P\\ \dim G=r }}b(G)=\sum_{\substack{G\subseteq P\\ \dim G=n-1-r}} b(G).\] 
Then we use $\delta(\lozenge P, t)=\sum_{\emptyset \subseteq G\subseteq P}b(G)A(\dim P, \dim G, t)$ and Lemma~\ref{mirror with} to find that $\delta(\lozenge P)$ is symmetric.
\end{proof}

\section{Integrally closed polytopes of small dimension}\label{section3}

In this part of the paper we show that all integrally closed polytopes of dimension at most 4 have a unimodal $\delta$-vector. First we prove an elementary lemma.

\begin{Lem}\label{LemmaUnimodal}
Let $P$ be an integrally closed polytope of dimension $d$. If $\delta_1 = 0$, then $\delta_2 = \cdots = \delta_d = 0$ and $P$ is a unimodular simplex. 
\end{Lem}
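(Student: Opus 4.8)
The plan is to first turn the hypothesis $\delta_1 = 0$ into a statement about lattice points. Expanding the Ehrhart series $\Ehr(P,t) = (1 + \delta_1 t + \cdots + \delta_d t^d)/(1-t)^{d+1}$ and comparing coefficients of $t^1$ gives $\sharp P = (d+1)\delta_0 + \delta_1 = d+1+\delta_1$, so $\delta_1 = 0$ forces $\sharp P = d+1$. Since every vertex of a lattice polytope is a lattice point and a $d$-dimensional polytope has at least $d+1$ vertices, $P$ must be a lattice simplex, say with vertex set $\{v_0, \ldots, v_d\}$, and the $v_i$ are its only lattice points.

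Next I would use integral closure to compute $\sharp(nP)$ for every $n \geqslant 1$. Any lattice point of $nP$ is, by definition of integrally closed, a sum of $n$ lattice points of $P$, hence of the form $\sum_{i=0}^d a_i v_i$ with $a_i \in \Z_{\geqslant 0}$ and $\sum_i a_i = n$. Conversely, any such combination equals $n$ times a convex combination of the $v_i$, so it lies in $nP$, and since $v_0, \ldots, v_d$ are affinely independent the tuple $(a_0, \ldots, a_d)$ is uniquely determined by the point. Therefore $\sharp(nP)$ equals the number of such tuples, i.e.\ $\binom{n+d}{d}$.

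Finally I would conclude: $\Ehr(P,t) = \sum_{n\geqslant 0}\binom{n+d}{d}t^n = (1-t)^{-(d+1)}$, so $\delta(P,t) = 1$; hence $\delta_1 = \cdots = \delta_d = 0$ and $\vol(P) = \delta_0 + \cdots + \delta_d = 1$, which means $P$ is a unimodular simplex. There is no genuine obstacle here: the two points that deserve a word of care are the elementary identity $\delta_1 = \sharp P - d - 1$ and the fact that affine independence of the vertices makes the representation $\sum_i a_i v_i$ unique, so that integral closure determines $\sharp(nP)$ exactly rather than merely bounding it.
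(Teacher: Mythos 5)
Your proof is correct and follows essentially the same route as the paper: both arguments extract $\sharp P = d+1$ from $\delta_1=0$, conclude that $P$ is a simplex whose only lattice points are its vertices, and then use integral closure to control $\sharp(nP)$. The only (minor) difference is the finish: you compute $\sharp(nP)=\binom{n+d}{d}$ exactly and read off $\delta(P,t)=1$, whereas the paper only uses the upper bound $\sharp(nP)\leqslant\binom{n+d}{d}$ at $n=d$ together with the nonnegativity of the $\delta_i$ --- your version is marginally more self-contained since it avoids invoking nonnegativity.
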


\begin{proof}
From the definition of the $\delta$-vector one deduces that
\[  \sharp (nP) = \binom{n+d}{d} + \delta_1 \binom{n+d-1}{d} + \cdots + \delta_d \binom{n}{d}. \]
In particular, if $\delta_1 = 0$ then $\sharp P = d+1$ and $P$ is a simplex. Since $P$ is integrally closed, it follows that $\sharp (nP) \leqslant \binom{n + d }{ d }$. Taking $n = d$ in the above formula shows then that necessarily $\delta_2 = \cdots = \delta_d = 0$. Then the normalized volume of $P$ equals 1 and hence $P$ is a unimodular simplex.
\end{proof}

\begin{Prop}\label{PropUnimodal}
Let $P$ be a polytope of dimension $d\geqslant 3$ with an interior lattice point (i.e., $\delta_d\neq 0$). Then
\[  \delta_0 \leqslant \delta_d \leqslant \delta_1 \leqslant \delta_{d-1} \leqslant \delta_2. \]
\end{Prop}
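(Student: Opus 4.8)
The statement is really a fact about $\delta$-vectors of polytopes possessing an interior lattice point (integral closedness is not used), so my plan is to read it off from the known structure of such $\delta$-vectors. Since $\delta_d\neq 0$, the polytope $P$ has codegree $l=1$ and degree $s=d$, so Stapledon's decomposition \cite{Stapledon} applies: there are polynomials
\[ a^*(t)=\sum_{i=0}^{d}a^*_i t^i,\qquad b^*(t)=\sum_{i=0}^{d-1}b^*_i t^i \]
with nonnegative coefficients such that $\delta(P,t)=a^*(t)+t\,b^*(t)$, where $a^*_i=a^*_{d-i}$, $b^*_i=b^*_{d-1-i}$, and $a^*_0=1$.

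First I would record, by comparing coefficients and using the two symmetry relations,
\[ \delta_0=1,\quad \delta_1=a^*_1+b^*_0,\quad \delta_2=a^*_2+b^*_1,\quad \delta_{d-1}=a^*_{d-1}+b^*_{d-2}=a^*_1+b^*_1,\quad \delta_d=a^*_d+b^*_{d-1}=1+b^*_0. \]
The inequality $\delta_0\leqslant\delta_d$ is immediate, since $\delta_d\geqslant 1$. For the remaining three it suffices to note
\[ \delta_1-\delta_d=a^*_1-a^*_0,\qquad \delta_{d-1}-\delta_1=b^*_1-b^*_0,\qquad \delta_2-\delta_{d-1}=a^*_2-a^*_1, \]
so the whole chain follows once we know $a^*_0\leqslant a^*_1\leqslant a^*_2$ and $b^*_0\leqslant b^*_1$.

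Now I would invoke the known inequalities for the two halves of the decomposition: $a^*_0\leqslant a^*_1$, and $a^*_1\leqslant a^*_j$ for $2\leqslant j\leqslant d-1$ (Stapledon's refinement of Hibi's lower bound theorem, applied to $a^*$), together with $b^*_0\leqslant b^*_j$ for $1\leqslant j\leqslant d-2$; see \cite{Stapledon}. Taking $j=2$ in the first and $j=1$ in the second yields exactly what is needed, and both uses require $d=s\geqslant 3$, which is precisely the hypothesis. (Alternatively, the middle inequality $\delta_1\leqslant\delta_{d-1}$, as well as $\delta_1\leqslant\delta_2$, can be quoted directly from Hibi's lower bound theorem, valid here because $\delta_d\neq 0$.)

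The argument involves essentially no computation beyond unwinding the decomposition, so the only real obstacle is bookkeeping: applying the two symmetries with the correct indices and quoting the precise instances of Stapledon's (and Hibi's) inequalities. I would also double-check the smallest case $d=3$ separately, where $\delta_2=\delta_{d-1}$ and the last inequality degenerates to the equality $a^*_2=a^*_1$.
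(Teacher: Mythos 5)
Your argument is correct, and the bookkeeping checks out: with codegree $l=1$ one indeed has $\delta_i=a^*_i+b^*_{i-1}$, the symmetries give exactly the five values you list, and the chain reduces to $a^*_0\leqslant a^*_1\leqslant a^*_2$ and $b^*_0\leqslant b^*_1$, which are instances of Stapledon's inequalities for the $a^*/b^*$ decomposition (the first needing $d\geqslant 3$, as you note, with $a^*_2=a^*_1$ when $d=3$). The paper takes a less uniform route: it assembles the four inequalities from separate sources, namely $\delta_0\leqslant\delta_d$ trivially; $\delta_d\leqslant\delta_1$ by an elementary count ($\delta_d$ is the number of interior lattice points, $\sharp P=\delta_1+d+1$, and the at least $d+1$ vertices all lie on the boundary); $\delta_1\leqslant\delta_{d-1}$ from Hibi's lower bound theorem; and $\delta_{d-1}\leqslant\delta_2$ for $d\geqslant 4$ from Stapledon's Eq.~(6). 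The two proofs ultimately rest on the same deep inputs --- your $a^*_1\leqslant a^*_2$ is precisely what underlies the cited Eq.~(6), and $b^*_0\leqslant b^*_1$ is equivalent to the $i=d-1$ case of Hibi's lower bound theorem --- but your version is more unified and makes transparent why the interleaved pattern $\delta_0\leqslant\delta_d\leqslant\delta_1\leqslant\delta_{d-1}\leqslant\delta_2$ arises: it is the beginning of the ``alternatingly increasing'' shape that a decomposition $a^*(t)+t\,b^*(t)$ into two symmetric nonnegative pieces naturally produces, which fits well with the theme of Section~2 of the paper. The one thing you give up is the elementary, self-contained proof of $\delta_d\leqslant\delta_1$: deriving it from $a^*_0\leqslant a^*_1$ imports a nontrivial theorem where a one-line counting argument suffices.
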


\begin{proof}
If $\delta_d\neq 0$, then trivially $\delta_d \geqslant \delta_0 =1$. One has $\delta_1\geqslant \delta_d$ since $\delta_d$ equals the number of interior lattice points in $P$, $\sharp P = \delta_1 + d+1$, and $P$ has at least $d+1$ vertices.
By Hibi's lower bound theorem \cite{HibiLower} we have $\delta_1 \leqslant \delta_{d-1}$. Finally, if $d\geqslant 4$ then $\delta_2\geqslant \delta_{d-1}$ by \cite[Eq.\ (6)]{Stapledon}. 
\end{proof}

\begin{Cor}
Let $P$ be an integrally closed polytope of dimension $d\leqslant 4$. Then $\delta(P)$ is unimodal.
\end{Cor}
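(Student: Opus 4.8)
The plan is a short case analysis, organized by whether $P$ has an interior lattice point (i.e.\ whether $\delta_d\neq 0$) and whether $\delta_1=0$, the latter being exactly the situation handled by Lemma~\ref{LemmaUnimodal}. One preliminary observation simplifies everything: appending a tail of zeros to a unimodal sequence that ends in a nonnegative number keeps it unimodal, so a $\delta$-vector of the shape $(1,\delta_1,\ldots,\delta_s,0,\ldots,0)$ is unimodal as soon as $(1,\delta_1,\ldots,\delta_s)$ is.

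First, if $\delta_1=0$ then by Lemma~\ref{LemmaUnimodal} $P$ is a unimodular simplex, so $\delta(P)=(1,0,\ldots,0)$ is unimodal; this is the only place where integral closedness is used. So from now on assume $\delta_1\geqslant 1$ (it is a nonnegative integer). If $d\leqslant 2$, then $\delta(P)$ is one of $(1)$, $(1,\delta_1)$, $(1,\delta_1,\delta_2)$, each of which is unimodal since a sequence of length $\leqslant 3$ beginning with $\delta_0=1\leqslant\delta_1$ cannot have a strict valley. If $d\in\{3,4\}$ and $P$ has an interior lattice point, then Proposition~\ref{PropUnimodal} applies: for $d=3$ it gives $\delta_0\leqslant\delta_3\leqslant\delta_1\leqslant\delta_2$, so $1=\delta_0\leqslant\delta_1\leqslant\delta_2\geqslant\delta_3$; for $d=4$ it gives $\delta_0\leqslant\delta_4\leqslant\delta_1\leqslant\delta_3\leqslant\delta_2$, so $1=\delta_0\leqslant\delta_1\leqslant\delta_2\geqslant\delta_3\geqslant\delta_4$. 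In both cases $\delta(P)$ is unimodal with top at index~$2$.

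It remains to treat $d\in\{3,4\}$, $\delta_1\geqslant 1$, $\delta_d=0$. Then $s:=\deg P\leqslant d-1\leqslant 3$ and $\delta(P)=(1,\delta_1,\ldots,\delta_s,0,\ldots,0)$. If $s\leqslant 2$ we are done by the preliminary observation, since $(1,\delta_1,\delta_2,0,\ldots,0)$ with $1\leqslant\delta_1$ is unimodal regardless of whether $\delta_1\leqslant\delta_2$ or $\delta_1>\delta_2$. The single remaining case is $d=4$, $s=3$, i.e.\ $\delta(P)=(1,\delta_1,\delta_2,\delta_3,0)$ with $\delta_1\geqslant 1$ and $\delta_3\geqslant 1$. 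Here I would invoke the inequality $\delta_2\geqslant\delta_3$ (equivalently $\delta_2\geqslant\delta_{d-1}$): with it, $\delta(P)$ has top at index~$2$ when $\delta_1\leqslant\delta_2$ and top at index~$1$ when $\delta_1>\delta_2$, so it is unimodal in either subcase, finishing the proof.

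The main obstacle is precisely this inequality $\delta_2\geqslant\delta_3$ in the case $d=4$, $s=3$. In Proposition~\ref{PropUnimodal} the analogue $\delta_2\geqslant\delta_{d-1}$ is deduced under the standing hypothesis $\delta_d\neq 0$, via Hibi's lower bound theorem \cite{HibiLower} and \cite[Eq.\ (6)]{Stapledon}, which are stated for polytopes with an interior lattice point; here $P$ has none. So the crux is to check that the relevant inequality of \cite{Stapledon} (or, alternatively, the generalized Hibi-type bound $\delta_1\leqslant\delta_{s-1}$ for a polytope of degree $s$) is valid for every lattice polytope of degree~$3$, not merely for those of full degree. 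Granting that, the corollary follows; all other steps are either Lemma~\ref{LemmaUnimodal}, Proposition~\ref{PropUnimodal}, or elementary bookkeeping with the trailing zeros of the $\delta$-vector.
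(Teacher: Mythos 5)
Your argument is correct and follows the paper's own proof almost step for step: Lemma~\ref{LemmaUnimodal} to dispose of $\delta_1=0$, Proposition~\ref{PropUnimodal} when $\delta_d\neq 0$, and bookkeeping with trailing zeros otherwise. The one step you leave conditional --- $\delta_2\geqslant\delta_3$ for a $4$-dimensional polytope of degree $3$ --- is fine: the inequalities of \cite{Stapledon} are proved for arbitrary lattice polytopes, with no hypothesis of an interior lattice point, and the paper itself invokes \cite[Eq.\ (6)]{Stapledon} for exactly this case (though only in a parenthetical remark). The paper's primary route through that case is slightly different: it deduces $\delta_0\leqslant\delta_1\leqslant\delta_2$ from \cite[Prop.\ 3.4]{Stanley} together with Lemma~\ref{LemmaUnimodal}, which already forces $(1,\delta_1,\delta_2,\delta_3,0)$ to be unimodal whatever $\delta_3$ is; your alternative closing inequality works equally well. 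The only other (cosmetic) divergence is at $d=2$, where the paper appeals to Scott's classification of $\delta$-vectors of polygons, while your elementary observation that a length-three sequence with $\delta_0\leqslant\delta_1$ has no valley suffices.
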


\begin{proof}
The case $d\leqslant 1$ is a trivial exercise. Every 2-dimensional lattice polytope admits a regular unimodular triangulation, and hence is integrally closed. In this case, all possible $\delta$-vectors are classified, essentially by Scott \cite{Scott}. They are all unimodal.

Next assume that $d=3$. If $\delta_3 = 0$, then $1 = \delta_0 > \delta_1 < \delta_2$ is excluded by Lemma~\ref{LemmaUnimodal}. If $\delta_3\neq 0$ then we apply Proposition~\ref{PropUnimodal}.

Finally, let $d=4$. If $\delta_4=\delta_3 = 0$, then $\delta(P)$ is unimodal by Lemma~\ref{LemmaUnimodal}. If $\delta_4 = 0$ and $\delta_3 \neq 0$, then we can apply \cite[Prop.\ 3.4]{Stanley} and Lemma~\ref{LemmaUnimodal} to see that $\delta_0 \leqslant \delta_1\leqslant \delta_2$. Hence $\delta(P)$ is unimodal. (In this case $\delta_3\leqslant \delta_2$ by \cite[Eq.\ (6)]{Stapledon}.) If $\delta_4\neq 0$ then we apply Proposition~\ref{PropUnimodal}.
\end{proof}



In general it is not true that $\delta_1 \geqslant \delta_s$ if $s$ is the degree of $P$, see Example~\ref{voorbeeld}.

\section{Integrally closed reflexive polytopes}\label{ReflexivePolytopes}

In this section we relate the unimodality question for reflexive polytopes to the existence of certain triangulations, which we call \textit{box unimodal triangulations}. First we need to discuss some preparatory work concerning triangulations.

Let $P$ be a reflexive polytope of dimension $d$ in $\mathbb{R}^d$ and let $\mathcal{T}$ be a triangulation of $\partial P$. Let $F$ be a face of $\mathcal{T}$ (the empty set is considered to be a face of $\mathcal{T}$ of dimension $-1$). The \textit{$h$-polynomial} of $F$ is
\[  h_F(t) = \sum_{F\subseteq G} t^{\dim G -\dim F}(1-t)^{d-1-\dim G}, \]
where we sum over all faces $G$ of $\mathcal{T}$ containing $F$. In fact this is the $h$-polynomial of a simplicial complex, namely of the \textit{link} of $F$ in $\partial P$. It is well known that $h_F(t)$ is a polynomial of degree $d' = d-1-\dim F$ with nonnegative integer coefficients. If $h_i$ denotes the coefficient of $t^i$ in $h_F(t)$ then
\[   h_i = h_{d' - i}.  \]
Moreover, if the triangulation is \textit{regular} then
\[   1 = h_0 \leqslant h_1 \leqslant \cdots  \leqslant h_{\lfloor d'/2\rfloor}, \]
i.e., the coefficients of $h_F(t)$ are unimodal. See for instance \cite[Lemma 2.9]{Stapledon} for a nice proof of these properties of $h_F(t)$. For the definition of a regular triangulation and a proof of the existence of such triangulations we refer to \cite[\S 1.F]{BrunsGubeladze}. 

Let $S$ be a lattice simplex in $\mathbb{R}^d$ of dimension $r$, with vertices $v_0,\ldots, v_r$. We embed $S$ as $\overline{S} = S\times \{1\}$ in $\mathbb{R}^{d+1}$ and we write $\overline{v_i}$ for the vertex $(v_i,1)$ of $\overline{S}$. Let $u : \R^{d+1} \to \R$ be the projection on the last coordinate. Then one defines the \textit{box polynomial} of $S$ as
\[ B_S(t) = \sum_{v\in \text{Box}(\overline{S})\cap \mathbb{Z}^{d+1}}  t^{u(v)}. \]
The coefficients of $B_S(t)$ are symmetric since we have the involution
\[  a_0 \overline{v_0} + \cdots + a_r \overline{v_r} \mapsto (1-a_0) \overline{v_0} + \cdots + (1- a_r) \overline{v_r} \]
on $\text{Box}(\overline{S})$. More precisely, if $b_i$ denotes the coefficient of $t^i$ in $B_S(t)$ then
\[  b_i = b_{r+1-i}.\]
We put $B_{\emptyset}(t) = 1$. Note that $B_S(t) = 0$ if $S$ is a unimodular simplex. 
Then we have the following theorem \cite[Thm.\ 1.3]{MustataPayne}, which is reminiscent of the Betke-McMullen formula \cite[Thm.\ 1]{BetkeMcMullen}.

\begin{Thm}
Let $P$ be a reflexive polytope and let $\mathcal{T}$ be a triangulation of the boundary $\partial P$. Then
\[  \delta(P,t) = \sum_{F\text{ face of }\mathcal{T}} B_F(t) h_F(t). \] 
\end{Thm}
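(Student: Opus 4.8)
The plan is to lift everything to the cone over $P$ and reinterpret the Ehrhart series as a single lattice-point generating function that can then be cut apart along $\mathcal{T}$.

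Since $P$ is reflexive we may assume its unique interior lattice point is the origin $0\in\R^d$. Embed $P$ at height one, $\overline P=P\times\{1\}\subseteq\R^{d+1}$, set $C=\mathrm{cone}(\overline P)$, and let $u\colon\R^{d+1}\to\R$ be the projection on the last coordinate as in the definition of $B_S$. Every lattice point of $C$ at height $n$ is $n$ times a lattice point of $\overline P$, so $\Ehr(P,t)=\sum_{w\in C\cap\Z^{d+1}}t^{u(w)}$. The first real step is to identify the generating function of the boundary cone $\partial C$. Because the supporting hyperplanes of the facets of $P$ are of the form $\langle\cdot,\cdot\rangle=-1$, the interior lattice points of $nP$ are exactly the lattice points of $(n-1)P$, so $\sharp\bigl(\partial(nP)\bigr)=\sharp(nP)-\sharp\bigl((n-1)P\bigr)$ for $n\geq1$; since a lattice point of $C$ at height $n\geq1$ lies on $\partial C$ iff it lies on $\partial(nP)$, this gives
\[ \sum_{w\in\partial C\cap\Z^{d+1}}t^{u(w)}=(1-t)\,\Ehr(P,t)=\frac{\delta(P,t)}{(1-t)^{d}}. \]
I expect this to be the main point: it is exactly here that reflexivity is used, and for a general lattice polytope the analogous identity fails.

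Next I would triangulate $\partial C$ using $\mathcal{T}$: the relatively open simplicial cones $\mathrm{relint}\bigl(\mathrm{cone}(\overline\sigma)\bigr)$, with $\sigma$ ranging over the nonempty faces of $\mathcal{T}$, partition $\partial C\setminus\{0\}$. Fix an $r$-dimensional $\sigma\in\mathcal{T}$ with vertices $v_0,\dots,v_r$. Each lattice point of $\mathrm{relint}\bigl(\mathrm{cone}(\overline\sigma)\bigr)$ is uniquely $\sum_{i=0}^r a_i\overline{v_i}$ with all $a_i>0$; letting $\tau\subseteq\sigma$ be the face spanned by those $v_i$ with $a_i\notin\Z$ and splitting $a_i=\{a_i\}+\lfloor a_i\rfloor$, one records the box point $\sum_{i\in\tau}\{a_i\}\overline{v_i}\in\bx(\overline\tau)\cap\Z^{d+1}$ together with integers $\lfloor a_i\rfloor\geq0$ for $i\in\tau$ and $a_i\geq1$ for $i\notin\tau$. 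This is a bijection, and since $u(\overline{v_i})=1$ the weight $t^{u(w)}$ factors, giving
\[ \sum_{w\in\mathrm{relint}(\mathrm{cone}(\overline\sigma))\cap\Z^{d+1}}t^{u(w)}=\sum_{\tau\subseteq\sigma}B_\tau(t)\,\frac{t^{\dim\sigma-\dim\tau}}{(1-t)^{\dim\sigma+1}}, \]
where $\tau$ runs over all faces of $\sigma$, including $\emptyset$ with $B_\emptyset(t)=1$; the factor $1/(1-t)$ per vertex of $\tau$ and $t/(1-t)$ per vertex of $\sigma$ outside $\tau$ accounts for the exponents.

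Finally, summing over all $\sigma$ (the lattice point $0\in\partial C$ being the $\sigma=\emptyset$ term) and interchanging the order of summation to group by $\tau$,
\[ \frac{\delta(P,t)}{(1-t)^{d}}=\sum_{\tau\in\mathcal{T}}B_\tau(t)\sum_{\sigma\supseteq\tau}\frac{t^{\dim\sigma-\dim\tau}}{(1-t)^{\dim\sigma+1}}=\sum_{\tau\in\mathcal{T}}B_\tau(t)\,\frac{h_\tau(t)}{(1-t)^{d}}, \]
where the second equality recognizes the inner sum as $h_\tau(t)/(1-t)^{d}$, directly from the definition of $h_\tau$ (recall $\dim\partial P=d-1$). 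Clearing $(1-t)^{d}$ yields the claim. The remaining points are routine: the uniqueness of the half-open box decomposition of a simplicial cone, the identity $\sum_{m\geq1}t^m=t/(1-t)$, and keeping the bookkeeping of the empty face and the apex of $C$ consistent so that the leading $1$ in $\Ehr(P,t)$ matches the $\sigma=\tau=\emptyset$ contribution.
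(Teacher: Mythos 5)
The paper does not prove this theorem; it simply cites it as \cite[Thm.~1.3]{MustataPayne}, so there is no internal proof to compare against. Your argument is a correct, self-contained proof, and it follows the standard route for Betke--McMullen-type formulas: pass to the cone $C$ over $\overline{P}$, use reflexivity to identify $\sum_{w\in\partial C\cap\Z^{d+1}}t^{u(w)}$ with $(1-t)\Ehr(P,t)=\delta(P,t)/(1-t)^d$ (via the fact that the facet hyperplanes have the form $\langle\cdot,y_F\rangle=-1$, so interior lattice points of $nP$ are exactly the lattice points of $(n-1)P$), partition $\partial C\setminus\{0\}$ into the relatively open cones over the nonempty faces of $\mathcal{T}$, and apply the half-open box decomposition of each simplicial cone. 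The bookkeeping all checks out: your inner sum $\sum_{\sigma\supseteq\tau}t^{\dim\sigma-\dim\tau}/(1-t)^{\dim\sigma+1}$ is precisely $h_\tau(t)/(1-t)^d$ with the paper's definition of $h_F$ (using $\dim\partial P=d-1$), the $\sigma=\tau=\emptyset$ terms correctly account for the apex and the leading $1$, and you correctly locate the single place where reflexivity is used. The only step you wave at that deserves one explicit sentence is why the recorded box point $\sum_{i\in\tau}\{a_i\}\overline{v_i}$ is itself a lattice point (it is the difference of the lattice point $w$ and the lattice point $\sum_i\lfloor a_i\rfloor\overline{v_i}$); with that noted, the proof is complete.
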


We call a simplex $S$ \textit{box unimodal} if $S=\emptyset$ or if the coefficients $b_1,b_2,\ldots,b_{\dim S}$ of $B_S(t)$ are a unimodal sequence. We call a triangulation $\mathcal{T}$ of a lattice polytope or of the boundary of a lattice polytope \textit{box unimodal} if $\mathcal{T}$ is regular and if every face of $\mathcal{T}$ is box unimodal. The following corollary is immediate from the above.

\begin{Cor}\label{CorBoxUnimodal}
Let $P$ be a reflexive polytope such that $\partial P$ admits a box unimodal triangulation. Then $\delta(P)$ is unimodal.
\end{Cor}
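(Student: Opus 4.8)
The plan is to combine the theorem of Musta\c{t}\u{a}--Payne quoted just above with the two unimodality ingredients at hand: the numerical fact that a sum of symmetric unimodal polynomials ``centered'' at the same place is again unimodal, and the hypothesis that every face of $\mathcal{T}$ is box unimodal. Concretely, write $\delta(P,t) = \sum_{F} B_F(t)\,h_F(t)$, where $F$ runs over the faces of the triangulation $\mathcal{T}$ of $\partial P$, including $F = \emptyset$. I would analyze each summand $B_F(t)\,h_F(t)$ separately and show it is symmetric and unimodal with a common center, so that the whole sum inherits unimodality.

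First I would pin down the degrees and symmetry of each factor. For a face $F$ of dimension $\dim F =: r$ in a triangulation of $\partial P$ (so $\dim \partial P = d-1$), the $h$-polynomial $h_F(t)$ has degree $d' = d-1-r$ with $h_i = h_{d'-i}$, and since $\mathcal{T}$ is regular its coefficients are unimodal; thus $h_F(t)$ is symmetric and unimodal about $d'/2 = (d-1-r)/2$. The box polynomial $B_F(t)$ is $0$ unless $F$ is a non-unimodular simplex; when nonzero, its lowest and highest terms are $b_1 t$ and $b_r t^r$ (no constant term, since a box point has all barycentric coordinates strictly between $0$ and $1$), and $b_i = b_{r+1-i}$, so $B_F(t)$ is symmetric about $(r+1)/2$; by the box unimodal hypothesis the coefficients $b_1,\ldots,b_r$ are unimodal, so $B_F(t)$ is a symmetric unimodal polynomial. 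A product of two symmetric unimodal polynomials with nonnegative coefficients is again symmetric and unimodal, centered at the sum of the two centers; here that center is $(r+1)/2 + (d-1-r)/2 = d/2$. The key point is that this center $d/2$ is \emph{independent of $F$}: every nonzero summand $B_F(t) h_F(t)$ is symmetric and unimodal about $d/2$. (The $F=\emptyset$ term contributes $h_\emptyset(t)$, which is symmetric and unimodal of degree $d-1$, hence also about $(d-1)/2$; one should note that $\delta(P,t)$ itself has degree $d$ and the $\emptyset$-term is the only one of degree $d-1$, which is fine since $P$ reflexive forces $\delta(P)$ symmetric about $d/2$ anyway, and $h_\emptyset$ together with the vertex terms still assembles correctly — more simply, one just observes every summand is symmetric and unimodal about $d/2$ once one accounts for parity, and a sum of such is symmetric unimodal about $d/2$.)

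Having established that $\delta(P,t)$ is a sum of polynomials each symmetric and unimodal about the common center $d/2$, I would conclude by the elementary fact that such a sum is itself symmetric and unimodal about $d/2$: if $p(t) = \sum_i p_i t^i$ and $q(t) = \sum_i q_i t^i$ are both symmetric and unimodal about $c$ (meaning nondecreasing up to $\lfloor c\rfloor$ and $p_i = p_{2c - i}$ etc.), then so is $p+q$, by adding the defining inequalities coefficientwise. Applying this over all faces $F$ of $\mathcal{T}$ gives that $\delta(P)$ is unimodal, which is exactly Corollary~\ref{CorBoxUnimodal}.

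The proof is essentially immediate from the quoted Musta\c{t}\u{a}--Payne theorem, so there is no serious obstacle; the only point requiring a little care is the bookkeeping on centers and parities — verifying that the product of a symmetric unimodal polynomial of degree $r+1$ (supported in degrees $1$ through $r$, but symmetric as a degree-$(r+1)$ object) with a symmetric unimodal polynomial of degree $d-1-r$ really does come out symmetric and unimodal about $d/2$, uniformly in $r$. This is where the regularity of $\mathcal{T}$ (giving unimodality of $h_F$) and the box unimodal hypothesis (giving unimodality of $B_F$) are both used, and where one must be slightly careful that ``unimodal'' for $B_F$ refers to $b_1,\dots,b_{\dim F}$ rather than including the absent constant term.
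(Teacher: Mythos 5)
Your proof is correct and is exactly the argument the paper intends (the paper simply declares the corollary ``immediate from the above''): each summand $B_F(t)h_F(t)$ is a product of a symmetric unimodal polynomial centered at $(\dim F+1)/2$ (box unimodality plus $b_i=b_{r+1-i}$) with one centered at $(d-1-\dim F)/2$ (regularity plus $h_i=h_{d'-i}$), hence is symmetric unimodal about the common center $d/2$, and a sum of such polynomials is again symmetric unimodal. The only small slip is your parenthetical worry about the $F=\emptyset$ term: since $\dim\emptyset=-1$, the polynomial $h_\emptyset(t)$ has degree $d'=d-1-(-1)=d$ and is symmetric about $d/2$ just like every other summand, so no special treatment is needed there.
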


\begin{Ex}\label{ExampleNotBoxUnimodal}
Let $S$ be the 5-dimensional simplex with vertices 
\[ (0,0,0,0,0), (1,0,0,0,0), (0,1,0,0,0), (0,0,1,0,0) , (0,0,0,1,0), (2,2,2,2,3).\]
Then $B_S(t) = t^2 + t^4$ since $\text{Box}(\overline{S}) \cap \Z^{6}  = \{(1, 1 ,1 ,1 ,1, 2),(2,2,2,2,2,4)\}$. Hence $S$ is not box unimodal. The idea of \cite{MustataPayne} is to use such a simplex as a facet of a reflexive polytope $P$ to obtain a nonunimodal $\delta$-vector $\delta(P)$. Since the simplex is \textit{empty}, i.e., it does not contain any lattice points besides its vertices, it is a face of every triangulation of $\partial P$, and hence $\partial P$ does not have any box unimodal triangulations.
\end{Ex}

We remark the following.

\begin{Prop}
Every lattice polytope $P$ of dimension $d\leqslant 4$ has a box unimodal triangulation.
\end{Prop}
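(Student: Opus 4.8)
The plan is to argue that for a lattice polytope $P$ of dimension $d \leqslant 4$, every simplex $S$ that can occur as a face of a triangulation of $P$ is automatically box unimodal, and then to invoke the existence of a regular triangulation (cited from \cite{BrunsGubeladze}) to finish. So the whole proposition reduces to a statement about box polynomials of lattice simplices of dimension $r \leqslant 4$: I must show $B_S(t)$ has unimodal coefficients $b_1, \ldots, b_r$ whenever $\dim S = r \leqslant 4$. Since $B_S(t)$ is symmetric, $b_i = b_{r+1-i}$, the degree of $B_S$ is at most $r$, and $b_1$ counts lattice points of the form $a_0\overline{v_0} + \cdots + a_r \overline{v_r}$ with all $a_i \in (0,1)$ landing at height $1$, these are sharply constrained for small $r$.

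The key steps, in order, would be as follows. First, dispose of $r \leqslant 2$: there $\mathrm{Box}(\overline S)$ is empty for $r=0,1$, and for $r=2$ the only possible lattice point at the relevant height forces $B_S(t) = b\, t$ for some $b \geqslant 0$ (here $b \leqslant 1$ since a lattice triangle in dimension $\leqslant 4$... actually $b$ can be arbitrary, but a single nonzero coefficient is trivially unimodal), so these are box unimodal. Second, handle $r = 3$: $B_S(t) = b_1 t + b_2 t^2 + b_1 t^3$ by symmetry (degree $\leqslant 3$, $b_0 = 0$), and any such three-term symmetric sequence $(b_1, b_2, b_1)$ is unimodal regardless of the values — so $r=3$ is automatic. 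Third, the only real case is $r = 4$: here $B_S(t) = b_1 t + b_2 t^2 + b_2 t^3 + b_1 t^4$, again symmetric with $b_0 = 0$ and degree $\leqslant 4$, and any symmetric four-term sequence $(b_1, b_2, b_2, b_1)$ is unimodal (it rises or falls then mirrors). Hence every lattice simplex of dimension at most $4$ is box unimodal, purely by the symmetry $b_i = b_{r+1-i}$ together with the degree bound and $b_0 = 0$. Finally, take any regular triangulation $\mathcal T$ of $P$ (which exists by \cite[\S 1.F]{BrunsGubeladze}); every face of $\mathcal T$ has dimension at most $d \leqslant 4$, hence is box unimodal, so $\mathcal T$ is a box unimodal triangulation.

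I do not expect a serious obstacle here: the content is entirely that a symmetric sequence of length $\leqslant 4$ (after stripping the forced zero in degree $0$, of length $\leqslant 5$ with vanishing ends) is trivially unimodal, so no arithmetic about the actual box point counts is needed. The one point requiring a sentence of care is the claim that $\deg B_S(t) \leqslant r$ and $b_0 = 0$: the latter holds because a point with all barycentric coordinates in the open interval $(0,1)$ has last coordinate strictly between $0$ and $r+1$... more precisely, after the embedding $\overline S = S \times \{1\}$, the height $u(v)$ of a box point $v = \sum a_i \overline{v_i}$ is $\sum a_i \in (0, r+1)$, so $u(v) \in \{1, \ldots, r\}$, giving both $b_0 = b_{r+1} = 0$ and $\deg B_S \leqslant r$. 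With that observation in place the proof is immediate for all $r \leqslant 4$, and one simply remarks that $r = 5$ is genuinely different, as Example~\ref{ExampleNotBoxUnimodal} shows.
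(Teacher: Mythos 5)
Your reduction to ``every lattice simplex of dimension $r\leqslant 4$ is box unimodal'' is a reasonable plan, but the step you declare trivial is exactly where the content lies, and your justification for it is false. A symmetric sequence is not automatically unimodal: $(2,1,2)$ and $(2,1,1,2)$ are symmetric but not unimodal (they decrease and then increase, which is the forbidden shape). So for $r=3$ the sequence $(b_1,b_2,b_1)$ and for $r=4$ the sequence $(b_1,b_2,b_2,b_1)$ are unimodal if and only if $b_1\leqslant b_2$, and nothing about the symmetry $b_i=b_{r+1-i}$, the degree bound, or $b_0=0$ gives you that inequality. This is precisely the failure mode exhibited by Example~\ref{ExampleNotBoxUnimodal}: $B_S(t)=t^2+t^4$ is perfectly symmetric, yet not unimodal. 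Since $b_1$ counts the lattice points in the relative interior of $S$, what you would actually have to prove is that a $3$- or $4$-dimensional lattice simplex never has more interior lattice points than height-$2$ box points; your remark that ``no arithmetic about the actual box point counts is needed'' is the opposite of the truth. (There are also minor slips for small $r$: an edge can have box points, and a triangle has $B_S(t)=b_1t+b_1t^2$ rather than a single term, but these do not affect unimodality.)

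The paper avoids the issue entirely by choosing a regular \emph{fine} triangulation, i.e., one all of whose faces are empty simplices. For an empty simplex $b_1=0$ (a height-$1$ box point is a non-vertex lattice point of $S$), hence $b_r=0$ by symmetry, and the remaining middle block $(b_2,\ldots,b_{r-1})$ has length at most $2$ for $r\leqslant 4$ and is constant by symmetry, so the sequence is unimodal. Your stronger claim --- that \emph{every} regular triangulation of a polytope of dimension at most $4$ is box unimodal --- does happen to be true, but it requires a genuine input: writing $\delta(S,t)=\sum_{F\subseteq S}B_F(t)$ and applying Hibi's lower bound theorem \cite{HibiLower} to a simplex $S$ with an interior lattice point (namely $\delta_1(S)\leqslant\delta_{d-1}(S)$) yields $b_1(S)\leqslant b_2(S)$ when $\dim S\in\{3,4\}$, and the case $b_1(S)=0$ is trivial. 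Either restrict to fine triangulations as the paper does, or supply this inequality; as written, your proof has a gap at its central step.
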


\begin{proof}
Let $\mathcal{T}$ be a regular \textit{fine} triangulation of $P$, i.e., consisting of empty simplices. Since the coefficient $b_1$ of $B_S(t)$ is 0 if $S$ is an empty simplex, it follows that $S$ is box unimodal if $\dim S\leqslant 4$.  
\end{proof}

By using this kind of triangulations and Corollary~\ref{CorBoxUnimodal} we recover Hibi's result that every reflexive polytope of dimension $d\leqslant 5$ has a unimodal $\delta$-vector. Note that the simplex $S$ of Example~\ref{ExampleNotBoxUnimodal} is a 5-dimensional polytope without box unimodal triangulations. We ask the following question.

\begin{Quest}\label{BoxUnimodalTriangulation}
Does every integrally closed polytope have a box unimodal triangulation\,?
\end{Quest}

An affirmative answer to this question would prove that every integrally closed reflexive polytope has a unimodal $\delta$-vector. We think that this is a challenging problem, even for 5-dimensional polytopes. Let us illustrate this with a famous example.

\begin{Ex}
In \cite{BrunsGubeladzeUHC} Bruns and Gubeladze discuss an example of a 5-dimensional integrally closed polytope $P$ that is not covered by its unimodular subsimplices. It can be embedded in $\R^5$ as the polytope with vertices
\[ \begin{array}{c} (0,0,0,0,0),(0,1,0,0,0),(0,0,1,0,0),(0,0,0,1,0),(0,0,0,0,1) \\ 
(1, 0, 2, 1, 1) , (1, 2, 0, 2, 1), (1, 1, 2, 0, 2), (1, 1, 1, 2, 0), (1, 2, 1, 1, 2). \end{array} \]
There are 65 subsimplices of maximal dimension that are not unimodular. They have normalized volume 2 (60 of them) or 3 (5 of them). It is easy to see that every simplex of volume 2 is box unimodal. The simplices of volume 3 all have box polynomial $2t^3$. We conclude that all regular triangulations of $P$ are box unimodal\,!
\end{Ex}

\begin{Rem}
Let us sketch a possible intuitive explanation for the existence of box unimodal triangulations. Let $P$ be an integrally closed polytope of dimension $d$ in $\R^d$ and let $\mathcal{T}$ be a regular triangulation of $P$. Assume that $S$ is a face of $\mathcal{T}$ of dimension $r$ with nonzero box polynomial. Then there is a point $v\in \text{Box}(\overline{S})$. Let $v_0,\ldots, v_r$ be the vertices of $S$. Then 
\[  v = a_0 \overline{v_0} + \cdots + a_r \overline{v_r},  \tag{$\spadesuit$}\label{schoppen}  \]
with $0 < a_i < 1$ for all $i$, and with $\sum_i a_i = u(v)$. By symmetry, we may assume that $u(v) \leqslant (r+1)/2$. On the other hand, we can also embed $P$ as $\overline{P} = P\times \{1\}$ in $\R^{d+1}$, and since $P$ is integrally closed, there are lattice points $p_1,\ldots , p_{u(v)}$ in $P$ such that
\[  v = \overline{p_1} + \cdots + \overline{p_{u(v)}}.  \tag{$\nabla$}\label{nabla}  \]
Some of the $p_i$ might coincide, or some $p_i$ might coincide with some $v_j$, but from (\ref{schoppen}) and (\ref{nabla}) we can always deduce an equality of two convex combinations, supported on \textsl{disjoint sets} of lattice points of $P$. Let us illustrate this with a simple example. The integrally closed 5-dimensional polytope with vertices $p_0 = (0,0,0,0,0),p_1 = (1,0,0,0,0),p_2 = (0,1,0,0,0),p_3 = (0,0,1,0,0), p_4=(0,0,0,1,0) , p_5 = (-1,-1,-1,-1,3)$ and $p_6 = (0,0,0,0,1)$ can be triangulated by using $S_1 = \langle p_0,\ldots , p_5 \rangle$ and $S_2 = \langle p_1 , \ldots , p_6 \rangle$ as simplices of maximal dimension. This triangulation is not box unimodal because $B_{S_1}(t) = t^2+ t^4$. The procedure above applied to the point $(0,0,0,0,1,2) \in \text{Box}(\overline{S_1})$ leads to the equality
\[   \frac{1}{5}p_1 + \cdots + \frac{1}{5}p_5  =  \frac{2}{5}p_0 + \frac{3}{5}p_6. \tag{$\blacksquare$}\label{boxje}  \]
Such an equality of convex combinations supported on disjoint sets induces one or more \textit{circuits}. We refer to \cite{DeLoeraRambauSantos} for the definition of a circuit, and also for the explanation of the notions used below. Such circuits are used for performing \textit{flips} on the triangulation. In the example, we can change the triangulation to the one given by the simplices
\[ \begin{array}{c} \langle p_0,p_1,p_2,p_3,p_4,p_6 \rangle, \langle p_0,p_1,p_2,p_3,p_5,p_6 \rangle , \langle p_0,p_1,p_2,p_4,p_5,p_6 \rangle,\\ \langle p_0,p_1,p_3,p_4,p_5,p_6 \rangle \text{ and } \langle p_0,p_2,p_3,p_4,p_5,p_6 \rangle, \end{array} \]
i.e., we flip the axis of the triangulation from $\langle p_1,\ldots ,p_5 \rangle$ to $\langle p_0,p_6 \rangle$ as indicated by (\ref{boxje}). The resulting triangulation is unimodular. Hence the philosophy is the following: since $P$ is integrally closed, simplices of a given triangulation with nonzero box polynomial induce circuits which can be used to modify the triangulation. In this way, one might hope to get rid of non box unimodal simplices. The theory of \textit{secondary polytopes}, used to study all regular triangulations of $P$, might be very helpful for achieving this goal.

To conclude, we wonder whether the following very strong version of Question~\ref{BoxUnimodalTriangulation} holds.
\end{Rem}

\begin{Quest}
Let $P$ be an integrally closed polytope. Does there exist a regular triangulation $\mathcal{T}$ of $P$ such that all faces $S$ of $\mathcal{T}$ satisfy
\begin{itemize}
\item $B_S(t) = 0$ if $\dim S$ is even and
\item $B_S(t) = \alpha\, t^{(\dim S+1)/2}$ for some $\alpha\in \mathbb{Z}_{\geqslant 0}$ if $\dim S$ is odd\,?
\end{itemize}
\end{Quest}

\bibliographystyle{alpha}
\bibliography{./LatticeParallelepipeds}

\end{document}